\documentclass{article}
\usepackage{graphicx}
\usepackage{amsmath}
\usepackage{amsfonts}
\usepackage{amssymb}
\usepackage{xcolor}
\usepackage{tikz}
\usepackage{pgfplots}
\pgfplotsset{compat=1.17}
\usepackage{hyperref}
\usepackage{cite}
\usepackage{float} 
\usepackage{amsthm}
\usepackage[a4paper, total={178mm,229mm}, top=35mm, left=19mm, right=19mm, bottom=25mm]{geometry}
\theoremstyle{definition}

\newtheorem{remark}{Remark}
\newtheorem{proposition}{Proposition}
\newtheorem{theorem}{Theorem}
\usepackage{algorithm}
\usepackage{algpseudocode}
\usepackage{subcaption} 
\usepackage{authblk}
\usepackage{comment}

\title{RUNNs: Ritz--Uzawa Neural Networks \\ for Solving Variational Problems}

\author[a,d]{Pablo Herrera\thanks{Corresponding author. E-mail address: pherrera@bcamath.org (P. Herrera).}}
\author[g]{Jamie M. Taylor}
\author[a,b]{Carlos Uriarte}
\author[d]{Ignacio Muga}
\author[c,a,f]{David Pardo}
\author[e]{Kristoffer G. van der Zee}

\affil[a]{Basque Center for Applied Mathematics, Bilbao, Spain}
\affil[b]{Curtin University, Perth, Australia}
\affil[c]{Department of Mathematics, University of the Basque Country UPV/EHU, Leioa, Spain}
\affil[d]{Instituto de Matemáticas, Pontificia Universidad Católica de Valparaíso, Valparaíso, Chile}
\affil[e]{School of Mathematical Sciences, University of Nottingham, Nottingham, UK}
\affil[f]{Ikerbasque: Basque Foundation for Science, Bilbao, Spain}
\affil[g]{Department of Mathematics, CUNEF Universidad, Madrid, Spain}

\date{\today}

\begin{document}

\maketitle

\begin{abstract}
Solving partial differential equations (PDEs) using neural networks presents different challenges, including integration errors and spectral bias, often leading to poor approximations. In addition, standard neural network-based methods, such as physics-informed neural networks (PINNs), often fail when dealing with PDEs characterized by low-regularity solutions, which are incompatible with the strong formulation given by PINNs.

To address these limitations, we introduce the Ritz--Uzawa neural networks (RUNNs) framework, an iterative methodology to solve variational problems, such as strong, weak, and ultra-weak formulations. Rewriting the PDE as a sequence of Ritz-type minimization problems within a Uzawa framework provides an iterative method that, in specific cases, reduces variance of the numerical integration error during training. We demonstrate that the strong formulation offers a passive variance reduction mechanism, whereas variance remains persistent in weak and ultra-weak formulations. Furthermore, we address the spectral bias of standard architectures through a data-driven frequency tuning strategy. By initializing a sinusoidal Fourier feature mapping based on the normalized cumulative power spectral density (NCPSD) of previous residuals or their proxies, the network dynamically adapts its frequency modes to capture high-frequency components and severe singularities. Numerical experiments demonstrate that RUNNs accurately solve highly oscillatory solutions and successfully recover a discontinuous $L^2$ solution from a distributional $H^{-2}$ source -- a scenario that is incompatible with an $H^1$-formulation.

\vspace{1em}
\noindent\textbf{Keywords:} Physics-informed neural networks; Deep Ritz method; Inexact uzawa method; Saddle-point problems; Spectral bias; Numerical integration; Variational formulations.
\end{abstract}

\section{Introduction}

Neural networks (NNs) have become useful tools for solving Partial Differential Equations (PDEs), providing a way to solve high-dimensional or parametric problems by transforming the PDE into a minimization problem via a loss function \cite{Raissi2019PINNs, EYu2018DeepRitz}. Specifically, developing robust, mesh-free solvers for differential equations allows for the accurate simulation of physical phenomena in engineering and science, avoiding the limitations of traditional mesh generation (i.e., the curse of dimensionality) \cite{han2018solving}.

Common examples of this approach include physics-informed neural networks (PINNs) \cite{Raissi2019PINNs}, which minimize the strong form of the residual, and the deep Ritz method (DRM) \cite{EYu2018DeepRitz}, which minimizes the energy functional associated with the weak variational form. Despite their use, practical challenges remain. First, there is often a mismatch between the regularity of the exact solution and the space spanned by the neural network, especially in problems with low-regularity solutions where standard PINNs may fail. Second, approximating the loss function via quadrature rules might introduce bias and variance. As shown in \cite{Rivera2022, taylor2025stochastic}, these integration errors can prevent the network from finding the true minimizer, leading to overfitting even with an ideal optimization path.

To properly accommodate problems with low-regularity solutions, where the residual naturally maps into a dual space, some approaches measure the loss in dual norms. For example, robust variational PINNs (RVPINNs) \cite{rojas2024robust} and machine-learning minimal-residual (ML-MRes) frameworks \cite{brevis2021machine} minimize residuals in dual norms. Similarly, using the classical resolution of saddle-point problems \cite{uzawa1958iterative, bacuta2006unified}, recent works have proposed neural implementations of the Uzawa iteration. These include deep Uzawa \cite{makridakis2024deep} and the Inexact Uzawa-double deep Ritz method \cite{BennyChackoBEV2025}, which replace trial and test spaces with neural networks. While these methods improve stability, they typically overlook the spectral bias inherent in standard architectures and do not explicitly address the relationship between the iterative scheme and the possible reduction of the variance in the numerical integration.

Our main contribution in this paper is the proposal of the Ritz--Uzawa neural networks (RUNNs) framework to iteratively solve linear PDEs in strong, weak, and ultra-weak variational formulations. RUNNs reformulate the PDE as a sequence of Ritz-type minimizations inside a Uzawa scheme. This framework is based on the Inexact Uzawa method \cite{bramble1997analysis}, which preserves convergence even when inner sub-problems are solved approximately. Furthermore, we explicitly address the spectral bias of neural networks \cite{tancik2020fourier,sitzmann2020implicit, wang2024} by employing Sinusoidal Fourier Feature Mapping to approximate dual variables, allowing efficient capture of high-frequency error components through iterative corrections. We also analyze the passive variance reduction property of different Uzawa formulations, showing it holds with strong formulations, but not in weak or ultra-weak ones.

The methodology approximates the primal and dual variables of the Uzawa scheme with neural networks trained via a hybrid optimization strategy combining least squares and Adam (LS/Adam) \cite{uriarte2025vpinnsLS, Cyr2019RobustTA, kingma2017adam}. This combination handles linear and non-linear parameters efficiently. We demonstrate that RUNNs show better stability compared to the standard deep Ritz method, particularly when dealing with low-regularity solutions (e.g., solutions in $L^2$ that do not belong to $H^1_0$) and high frequency behavior.

The paper is organized as follows. Section~\ref{sec:Math_framework} establishes the abstract mathematical framework and proves the convergence of the inexact Uzawa method, categorizing the resolution into three distinct optimization approaches (Approaches 1, 2, and 3). Specifically, it extends the iterative scheme to solve strong, weak, and ultra-weak variational formulations, analyzing the contraction properties of the involved operator and demonstrating how these formulations generalize existing methods like the deep double Ritz method (D$^2$RM) \cite{uriarte2023DoubleRitz} and multilevel neural networks \cite{aldirany2024multi, wang2024}. Section~\ref{sec:architecture_and_training} describes the neural network architectures employed, introducing a hybrid model that combines standard multilayer perceptrons (MLPs) with a sinusoidal Fourier feature mapping. This section details an adaptive strategy that uses the normalized cumulative power spectral density (NCPSD) of the residuals to tune the frequency bandwidth, effectively mitigating spectral bias. Furthermore, it presents the hybrid least squares/Adam (LS/Adam) optimization strategy, which decouples the training of linear and non-linear parameters using Tikhonov regularization for the output layer. Section~\ref{sec:Loss_Discretization} discusses the discretization of the loss function, implementing unbiased third-order stratified stochastic quadrature rules to control integration bias. It also provides a theoretical analysis of the variance asymptotics, highlighting the difference in passive variance reduction between strong formulations (where variance tends to zero) and variational formulations (where variance remains non-zero). Finally, Section~\ref{section:Numerical_experiments} presents numerical experiments validating the approach. These include a comparison of convergence rates between pure Adam and LS/Adam, a high-frequency problem solved via spectral matching, and a challenging test case for an ultra-weak formulation with a distributional $H^{-2}$ source term (the derivative of a Dirac delta), where the method successfully recovers a discontinuous solution in $L^2$ that falls outside the standard $H^1_0$ space.

\section{Mathematical framework}\label{sec:Math_framework}

\subsection{Model problem}

Let $\mathbb{U}$ and $\mathbb{V}$ be two Hilbert spaces endowed with inner products $(\cdot,\cdot)_{\mathbb{U}}$ and $(\cdot,\cdot)_{\mathbb{V}}$, respectively.
Let $b:\mathbb{U}\times\mathbb{V}\longrightarrow\mathbb{R}$ be a bilinear form, such that for any given continuous linear functional $\ell : \mathbb{V}\longrightarrow\mathbb{R}$, the following variational problem admits a unique solution:
\begin{equation}
    \left\{
    \begin{array}{ll}
    \text{Find } u^* \in \mathbb{U} \text{ such that} & \\
        b(u^*,v) = \ell(v),\;\forall\, v\in \mathbb{V}.
\end{array}
    \right.
    \label{eq:variational_form_problem}
\end{equation}
It is well-known (see, e.g.,~\cite[Theorem 25.9]{ErnGuermond2021_FiniteElementsII}) that problem~\eqref{eq:variational_form_problem} admits a unique solution whenever the following conditions hold:
\begin{alignat}{3}
&\textbf{Continuity:}& \quad &\exists\, M>0 \text{ such that } \sup_{v \in \mathbb{V} \setminus \{0\}} \frac{|b(u,v)|}{\|v\|_\mathbb{V}} \leq M \|u\|_\mathbb{U}, \quad &&\forall u \in \mathbb{U},\label{eq:continuity} \\
&\textbf{Injectivity:}& \quad &\exists\, \gamma_1>0 \text{ such that } \sup_{v \in \mathbb{V} \setminus \{0\}} \frac{|b(u,v)|}{\|v\|_\mathbb{V}} \geq \gamma_1 \|u\|_\mathbb{U}, \quad &&\forall u \in \mathbb{U}, \label{eq:inf-sup} \\
&\textbf{Surjectivity:}& \quad &\exists\, \gamma_2>0 \text{ such that } \sup_{u \in \mathbb{U} \setminus \{0\}} \frac{|b(u,v)|}{\|u\|_\mathbb{U}} \geq \gamma_2 \|v\|_\mathbb{V}, \quad &&\forall v \in \mathbb{V}. \label{eq:surjectivity}
\end{alignat}
As a result, the lower-bound constants coincide (i.e., $\gamma_1 
= \gamma_2 =:\gamma$) and the following robustness relation holds:
\begin{equation}\label{eq:dual_inf-sup}
\gamma \|u\|_\mathbb{U} \leq \sup_{v\in\mathbb{V}\setminus\{0\}} \frac{|b(u,v)|}{\|v\|_\mathbb{V}} \leq M \|u\|_\mathbb{U},\qquad u\in\mathbb{U}.
\end{equation}
\subsection{The Uzawa method}
Given $u\in\mathbb U$, we define the residual $r=r(u)\in\mathbb V$ as the unique solution to the following variational problem:
\begin{equation}\label{eq:residual}
(r,v)_{\mathbb{V}} = \ell(v) - b(u,v)=b(u^*-u,v), \quad \forall v\in \mathbb{V}.
\end{equation} Note that $r$ is the Riesz representative in $\mathbb{V}$ of the continuous linear functional $v\mapsto b(u^*-u,v)$.

Now, given the above $r=r(u)\in\mathbb{V}$, let us define $\delta=\delta(r)$ as solution to the following variational problem:
\begin{equation}\label{eq:delta}
(\delta,w)_{\mathbb{U}} = b(w,r), \quad \forall w\in \mathbb{U}.
\end{equation}
Analogously, $\delta$ is the Riesz representative in $\mathbb{U}$ of the continuous linear functional $w \mapsto b(w,r)$. Therefore, $\delta$ acts as the optimal correction direction in the trial space driven by the residual $r$.
\begin{proposition}\label{prop:equivalences}
Let $\left\{u^k\right\}\subset\mathbb U$ be a sequence and define $\left\{r^k\right\}:=\left\{r(u^k)\right\}\subset \mathbb V$ and 
$\left\{\delta^k\right\}=\left\{\delta(r^k)\right\}\subset\mathbb U$.
Then, the following statements are equivalent:
\begin{enumerate}
\item[(i)] $\left\{u^k\right\}\to u^*$ in $\mathbb U$.
\item[(ii)] $\left\{r^k\right\}\to 0_\mathbb V$ in $\mathbb V$.
\item[(iii)] $\left\{\delta^k\right\}\to 0_\mathbb U$ in $\mathbb U$.
\end{enumerate}
\end{proposition}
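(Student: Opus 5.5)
The plan is to prove two-sided norm equivalences between $\|u^*-u\|_{\mathbb U}$, $\|r(u)\|_{\mathbb V}$ and $\|\delta(r(u))\|_{\mathbb U}$, with constants depending only on $\gamma$ and $M$. Once these hold for every $u\in\mathbb U$, applying them to $u=u^k$ makes (i), (ii) and (iii) equivalent at once. We write $e:=u^*-u$, $r=r(u)$ and $\delta=\delta(r)$.

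\textbf{Step 1: (i)$\Leftrightarrow$(ii).} By the Riesz representation in \eqref{eq:residual},
\[
\|r\|_{\mathbb V}=\sup_{v\in\mathbb V\setminus\{0\}}\frac{(r,v)_{\mathbb V}}{\|v\|_{\mathbb V}}=\sup_{v\in\mathbb V\setminus\{0\}}\frac{|b(e,v)|}{\|v\|_{\mathbb V}},
\]
so \eqref{eq:dual_inf-sup} gives $\gamma\|e\|_{\mathbb U}\le\|r\|_{\mathbb V}\le M\|e\|_{\mathbb U}$.

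\textbf{Step 2: (ii)$\Leftrightarrow$(iii).} In the same way, \eqref{eq:delta} gives
\[
\|\delta\|_{\mathbb U}=\sup_{w\in\mathbb U\setminus\{0\}}\frac{|b(w,r)|}{\|w\|_{\mathbb U}}.
\]
The upper bound $\|\delta\|_{\mathbb U}\le M\|r\|_{\mathbb V}$ follows from the continuity of $b$ as a bilinear form. Condition \eqref{eq:continuity} is stated as a supremum over $v$, but it is equivalent to $|b(u,v)|\le M\|u\|_{\mathbb U}\|v\|_{\mathbb V}$, which is symmetric in the two arguments. The lower bound $\|\delta\|_{\mathbb U}\ge\gamma_2\|r\|_{\mathbb V}=\gamma\|r\|_{\mathbb V}$ is exactly the surjectivity condition \eqref{eq:surjectivity} applied with $v=r$.

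\textbf{Chaining.} Combining the two steps,
\[
\gamma^2\|u^*-u\|_{\mathbb U}\le\|\delta(r(u))\|_{\mathbb U}\le M^2\|u^*-u\|_{\mathbb U}.
\]
This shows all three quantities tend to zero together along any sequence $\{u^k\}$.

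The argument is essentially routine. The only point needing care is Step 2's lower bound: it is the one place where surjectivity (the adjoint inf-sup) is used, not injectivity. One could instead use $\gamma_1=\gamma_2$ from the paper, but citing \eqref{eq:surjectivity} directly is cleaner.
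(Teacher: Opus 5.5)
Your proposal is correct and follows essentially the paper's argument. It uses the same four inequalities: continuity gives $\|r\|_{\mathbb V}\le M\|u^*-u\|_{\mathbb U}$ and $\|\delta\|_{\mathbb U}\le M\|r\|_{\mathbb V}$, and inf-sup together with surjectivity gives the reverse bounds, chained to $\gamma^2\|u^*-u\|_{\mathbb U}\le\|\delta\|_{\mathbb U}$; you merely package these as two-sided equivalences instead of the paper's cycle (i)$\Rightarrow$(ii)$\Rightarrow$(iii)$\Rightarrow$(i), and your explicit appeal to \eqref{eq:surjectivity} for $\gamma\|r\|_{\mathbb V}\le\|\delta\|_{\mathbb U}$ is, if anything, more precise than the paper's citation at that step.
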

\begin{proof}
    If $\left\{u^k\right\}\to u^*$ in $\mathbb U$, then 
    \begin{align}
    \|r^k\|_\mathbb V=\sup_{v\in\mathbb V}{(r^k,v)_\mathbb V\over\|v\|_\mathbb V}
    =\sup_{v\in\mathbb V}{b(u^*-u^k,v)\over\|v\|_\mathbb V}\leq M \|u^*-u^k\|_\mathbb U,\tag{by~\eqref{eq:residual}~and~\eqref{eq:continuity} }
    \end{align}
    which proves (i) $\Rightarrow$ (ii).
    If $\{r^k\}\to 0_\mathbb V$ in $\mathbb V$, then
    \begin{align}
    \|\delta^k\|_\mathbb U=\sup_{w\in\mathbb U}{(\delta^k,w)_\mathbb U\over\|w\|_\mathbb U}
    =\sup_{w \in\mathbb U}{b(w,r^k)\over\|w\|_\mathbb U}\leq M \|r^k\|_\mathbb V,
    \tag{by~\eqref{eq:delta}~and~\eqref{eq:continuity} }
    \end{align}
    which proves (ii) $\Rightarrow$ (iii).
If $\{\delta^k\}\to 0_\mathbb U$ in $\mathbb U$, then
    \begin{align}
    \|u^*-u\|_\mathbb U\leq {1\over\gamma}\sup_{v\in\mathbb V}{b(u^*-u^k,v)\over\|v\|_\mathbb V}
    = {1\over\gamma}\|r^k\|_\mathbb V\leq {1\over\gamma^2}\sup_{w\in\mathbb U}{b(w,r^k)\over\|w\|_\mathbb U}={1\over\gamma^2}\|\delta^k\|_\mathbb U, 
    \tag{by~\eqref{eq:inf-sup}, \eqref{eq:residual}, \eqref{eq:dual_inf-sup}~and~\eqref{eq:delta}}
    \end{align}
    which proves (iii) $\Rightarrow$ (i).
\end{proof}

Given an initial guess $u^0\in\mathbb U$, the Uzawa method (see e.g., \cite{uzawa1958iterative, HamdBBEV2025,BennyChackoBEV2025}) constructs a sequence $\left\{u^k\right\}\subset \mathbb U$ whose elements are updated in the direction of $\delta^k=\delta(r^k)=\delta(r(u^k))$ with a step size $\rho>0$ as follows:
\begin{equation}\label{eq:uzawa}
\left\{
\begin{array}{l}
u^0\in \mathbb U\,,\\
u^{k+1} = u^k+\rho\delta^k=u^0+\rho\displaystyle\sum_{j=0}^k\delta^j\,.
\end{array}
\right.
\end{equation}

\paragraph{Gradient-descent viewpoint.}
The iterative scheme of the Uzawa method,
$$u^{k+1} = u^k + \rho \delta^k,$$
can be naturally interpreted as a residual-oriented gradient-descent iteration. Let us define the quadratic functional $J(u) := \frac{1}{2}\|r(u)\|_{\mathbb{V}}^2$. We formally define its gradient $\nabla J(u)$ as the Riesz representative in $\mathbb{U}$ of its Fréchet derivative $D J(u)$ that lies in the topological dual of $\mathbb{U}$ and satisfies
$$ (\nabla J(u), w)_{\mathbb{U}} = \langle D J(u), w\rangle_{\mathbb{U}^* \times \mathbb{U}} = \left. \frac{d}{d\epsilon} J(u + \epsilon w) \right|_{\epsilon=0}, \qquad \forall w \in \mathbb{U}.$$
A direct calculation demonstrates that the above is equal to $-b(w, r(u))$ for all $w \in \mathbb{U}$. Consequently, we obtain the identity $\nabla J(u) = -\delta(r(u))$. This relationship allows the Uzawa method, with step size $\rho > 0$, to be equivalently rewritten as
$$u^{k+1} = u^k - \rho \nabla J(u^k).$$

The following subsection discusses the step size tuning to guarantee convergence.

\subsection{Convergence of the Uzawa method}

Let $B:\mathbb{U}\longrightarrow \mathbb{V}$ and $B':\mathbb{V}\longrightarrow \mathbb{U}$ be the trial-to-test and test-to-trial operators inherited from the bilinear form $b(\cdot,\cdot)$ as follows: 
\begin{equation}
    (Bu,v)_\mathbb{V} = b(u,v) = (u,B'v)_\mathbb{U},\qquad u\in\mathbb{U},v\in\mathbb{V}.
\end{equation} 
Then,
$$
r^k=B(u^*-u^k) \qquad\hbox{ and }\qquad \delta^k=B'r^k=B'B(u^*-u^k).
$$
Defining the error $e^k := u^*-u^k$, the iterative scheme~\eqref{eq:uzawa} yields
\begin{align}\label{eq:contraction}
e^{k+1}=u^*-u^{k+1}= u^*-u^{k} -\rho\delta^k=(I-\rho B'B)(u^*-u^k) =(I-\rho B'B)e^k.
\end{align} 
\begin{proposition}\label{prop:contraction}
    The operator $I-\rho B'B$ is a contraction (hence, $\{u^k\}\to u^*$) whenever $\rho <2 \|B\|^{-2}$.
Indeed, the optimal choice for $\rho$ is $\rho^*:=2/(\|B\|^2+\|B^{-1}\|^{-2})$, with optimal contraction rate given by
    $$
\|I-\rho^*B'B\|={\|B\|^2-\|B^{-1}\|^{-2}\over\|B\|^2+\|B^{-1}\|^{-2}}.
$$
\end{proposition}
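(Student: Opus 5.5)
The plan is to exploit the fact that $B'B:\mathbb U\to\mathbb U$ is a bounded, self-adjoint, positive definite operator and to read off the norm of $I-\rho B'B$ from its spectrum. First, $B'$ is the Hilbert adjoint of $B$ by the defining identity $(Bu,v)_\mathbb V=(u,B'v)_\mathbb U$. Hence
$$(B'Bu,w)_\mathbb U=(Bu,Bw)_\mathbb V=(u,B'Bw)_\mathbb U,$$
so $B'B$ is self-adjoint. Moreover $(B'Bu,u)_\mathbb U=\|Bu\|_\mathbb V^2$.

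Next I bound this quadratic form using \eqref{eq:dual_inf-sup}. Since $\sup_{v}\,b(u,v)/\|v\|_\mathbb V=\|Bu\|_\mathbb V$, we get
$$\gamma^2\|u\|_\mathbb U^2\le (B'Bu,u)_\mathbb U\le M^2\|u\|_\mathbb U^2 .$$
The sharp constants here are $\|B\|^2$ on the right and $\|B^{-1}\|^{-2}$ on the left. The operator $B$ is invertible by injectivity plus surjectivity (the well-posedness assumption), and $\inf_{\|u\|=1}\|Bu\|=\|B^{-1}\|^{-1}$. So the spectrum $\sigma(B'B)$ lies in $[m,L]$, where $m:=\|B^{-1}\|^{-2}$ and $L:=\|B\|^2$, and both endpoints belong to the spectrum. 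This follows because for a self-adjoint operator the extremes of the numerical range belong to the spectrum.

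Then I apply the spectral theorem (or simply the fact that for self-adjoint $T$, $\|T\|=\sup_{\|u\|=1}|(Tu,u)|$). Since $I-\rho B'B$ is self-adjoint, this gives
$$\|I-\rho B'B\|=\sup_{\lambda\in\sigma(B'B)}|1-\rho\lambda|=\max\{|1-\rho m|,\,|1-\rho L|\}.$$
For $0<\rho<2/L$ both quantities are strictly less than $1$, since $0<\rho m\le\rho L<2$. So $I-\rho B'B$ is a contraction, and \eqref{eq:contraction} gives $\|e^k\|\le\|I-\rho B'B\|^k\|e^0\|\to0$.

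Finally, I minimize $\max\{|1-\rho m|,|1-\rho L|\}$ over $\rho$. The function $|1-\rho m|$ is decreasing and $|1-\rho L|$ is increasing on the relevant range, so the minimum occurs where $1-\rho m=\rho L-1$. This gives $\rho^*=2/(L+m)$ and the value $(L-m)/(L+m)$, which is the claimed rate.

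I expect the main point needing care to be the identification of the exact extremal constants with $\|B\|^2$ and $\|B^{-1}\|^{-2}$, together with the claim that they are attained in the spectrum; this is what makes the formula an equality and the choice $\rho^*$ optimal rather than merely an upper bound. I would handle it via the standard characterization $\min\sigma(T)=\inf_{\|u\|=1}(Tu,u)$ and $\max\sigma(T)=\sup_{\|u\|=1}(Tu,u)$ for self-adjoint $T$.
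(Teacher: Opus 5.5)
Your proposal is correct and follows essentially the same route as the paper. Both arguments use self-adjointness of $I-\rho B'B$ to compute its norm from the quadratic form $1-\rho\|Bu\|_{\mathbb V}^2/\|u\|_{\mathbb U}^2$ (the paper cites Brezis, Proposition~6.9, for exactly this fact). Both then identify the extreme values as $\|B\|^2$ and $\|B^{-1}\|^{-2}$, and minimize $\max\{\rho\|B\|^2-1,\,1-\rho\|B^{-1}\|^{-2}\}$ to obtain $\rho^*$ and the stated rate.
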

\begin{proof}
Because $B'B$ is self-adjoint, so is $I-\rho B'B$. Using~\cite[Proposition~6.9]{Brezis2010}, we have that $\|I-\rho B'B\|<1$ is satisfied provided 
$$
-\|u\|_\mathbb U^2 < \|u\|_\mathbb U^2 -\rho \|Bu\|_\mathbb V^2 <\|u\|_\mathbb U^2, \quad\forall u\in\mathbb U\setminus\{0_\mathbb U\}.
$$
The upper bound is always satisfied because $\rho>0$, while the lower bound holds when
$$
\rho<2\left({\|u\|_\mathbb U\over\|Bu\|_\mathbb V}\right)^2,\quad\forall u\in\mathbb U\setminus\{0_\mathbb U\}.
$$
The upper bound is minimized when $\|Bu\|_\mathbb V/\|u\|_\mathbb U$ attains its maximum value, yielding $\rho \leq 2\|B\|^{-2}$.
Moreover,
$$
\begin{array}{rl}
\|I-\rho B'B\|= & \displaystyle\max_{u\neq 0}\left| 1-\rho {\|Bu\|_{\mathbb V}^2\over \|u\|_\mathbb U^2}\right|=
\max\Big\{
\rho \|B\|^2-1\,~,\,1-\rho/\|B^{-1}\|^{2}\Big\}\\\\
= & 
\left\{
\begin{array}{ll}
\rho\|B\|^2-1 & \hbox{ if }\, \rho \ge 2\Big(\|B^{-1}\|^{-2}+\|B\|^2\Big)^{-1},\\\\
1-\rho/\|B^{-1}\|^{2} & \hbox{ if }\, \rho \le 2\Big(\|B^{-1}\|^{-2}+\|B\|^2\Big)^{-1},
\end{array}\right.
\end{array}
$$
whose minimum (as a function of $\rho$) is attained exactly at $\rho^*=2\Big(\|B^{-1}\|^{-2}+\|B\|^2\Big)^{-1}$. 
\end{proof}

\begin{remark}[Ideal convergence with graph inner product]\label{section:particular_optimal}
By equipping $\mathbb{U}$ with the graph inner product $(u,w)_\mathbb{U}:=(Bu,Bw)_\mathbb{V}$, we obtain $B'=B^{-1}$.
As a result,
\begin{equation}\label{Uzawa_iterative}
e^{k+1} = (1-\rho) e^{k},
\end{equation} 
which yields the optimal choice $\rho^*=1$ that reduces the iterative scheme to a single step.
\end{remark}

\subsection{Combining Ritz and Uzawa: three approaches}

Following the definitions of~\eqref{eq:residual} and~\eqref{eq:delta}, for an initial guess $u^{0} \in \mathbb{U}$, the Uzawa iterative method reads as:
\begin{equation}
    \left\{
    \begin{aligned}
        (r^{k},v)_{\mathbb{V}} &= \ell(v)  - b(u^k,v), &&\forall v\in\mathbb{V},\\
        (\delta^{k},w)_{\mathbb{U}} &= b(w,r^{k}), &&\forall w\in\mathbb{U},\\
        u^{k+1} &= u^k + \rho \delta^k.
    \end{aligned}
    \right.
\label{eq:Uzawa_continuous0}
\end{equation}
Now, to address the variational characterizations of $r^k$ and $\delta^k$, we reformulate them as optimization problems in the following three ways:
\subsubsection{Approach 1}\label{subsection:main_approach1}

We formulate the computation of the residual $r^k$ and the correction $\delta^k$ in terms of suitable Ritz minimizations within the functional spaces $\mathbb{V}$ and $\mathbb{U}$, respectively. We then construct the updated solution $u^{k+1}$ following the summation scheme indicated in \eqref{eq:uzawa}, which yields the following iterative method: given $u^0\in\mathbb{U}$,
\begin{equation}
    \left\{
    \begin{aligned}
        r^{k} &= \arg\min_{r\in \mathbb{V}} \frac{1}{2} \|r\|^2_{\mathbb{V}} - \ell(r) + b(u^k,r), \\[1ex]
        \delta^{k} &= \arg\min_{\delta\in \mathbb{U}} \frac{1}{2} \|\delta\|^2_{\mathbb{U}} - b(\delta,r^{k}), \\[1ex]
        u^{k+1} &= u^0 + \rho \delta^0 + \rho \delta^1 + \ldots + \rho\delta^k.
    \end{aligned}
    \right.
    \label{eq:Ritz-Uzawa_summation}
\end{equation}
Note that \eqref{eq:Ritz-Uzawa_summation} can be viewed as a generalization of the work on multilevel neural networks developed in \cite{aldirany2024multi} to (variational) problems using the Ritz method.
We emphasize that this approach does not require the bilinear form $b$ to be symmetric or positive definite.

\subsubsection{Approach 2} \label{subsection:main_approach2}

The function spaces defined by neural network architectures are generally not closed under addition. This implies that explicitly computing and storing the sum that defines $u^{k+1}$ in Approach 1 could become prohibitively expensive in memory as the iteration counter $k$ increases. To avoid this limitation, we reformulate the update step as an additional optimization problem within the ideal setting. Specifically, we replace the summation with the following $\mathbb{U}$-norm minimization problem:
\begin{equation}
    u^{k+1} = \displaystyle\arg\min_{u\in\mathbb{U}} \Vert u - (u^k + \rho \delta^k)\Vert_{\mathbb{U}}^2\,.
\end{equation}
Consequently, the new iterative method reads as follows: given $u^k\in\mathbb{U}$,
\begin{equation}\label{eq:approach2_exact}
    \left\{
    \begin{aligned}
        r^{k}    &= \arg\min_{r \in \mathbb{V}} \frac{1}{2} \|r\|^2_{\mathbb{V}} - \ell(r) + b(u^k,r), \\[1ex]
        \delta^{k} &= \arg\min_{\delta \in \mathbb{U}} \frac{1}{2} \|\delta\|^2_{\mathbb{U}} - b(\delta,r^{k}), \\[1ex]
        u^{k+1}  &= \arg\min_{u \in \mathbb{U}} \| u - (u^k + \rho \delta^k) \|_{\mathbb{U}}^2.
    \end{aligned}
    \right.
\end{equation} 
As with Approach 1, this ideal formulation has the advantage that the integrands of the loss functions associated with $r^k$ and $\delta^k$ tend to zero at convergence. Furthermore, defining $u^{k+1}$ via minimization avoids the need to dynamically expand the network architecture during the iterative process.

\subsubsection{Approach 3} \label{subsection:main_approach3}

Replacing $\delta^k$ with $\frac{u^{k+1}-u^k}{\rho}$ in \eqref{eq:Uzawa_continuous0}, simplifying $u^{k+1}$ and rewriting the resulting equation as a minimization problem, we obtain the following two-step minimization scheme: given $u^k\in\mathbb U$,

\begin{equation}
    \left\{
    \begin{aligned}
        r^{k}   &= \arg\min_{r \in \mathbb{V}} \frac{1}{2} \|r\|^2_{\mathbb{V}} - \ell(r) + b(u^k,r), \\[2ex]
        u^{k+1} &= \arg\min_{u \in \mathbb{U}} \frac{1}{2} \|u\|^2_{\mathbb{U}} - (u^{k},u)_{\mathbb{U}} - \rho b(u,r^{k}).
    \end{aligned}
    \right.
    \label{eq:Ritz-Uzawa_two_eqs}
\end{equation} The integrand of the second minimization does not tend to zero at converge.
In Section~\ref{section:Simplifications}, we show how this approach relates to the Ritz, double Ritz, and adjoint Ritz methods \cite{EYu2018DeepRitz,uriarte2023DoubleRitz}.
\subsection{Simplifications under some usual formulations}\label{section:Simplifications}

\subsubsection{Strong formulation}\label{section:PINNs}
For $\mathbb{V}=L^2$ equipped with the standard inner product, we consider the following framework:
\begin{itemize}
    \item $B:\mathbb{U}\longrightarrow L^2$ is the available PDE operator and $f\in L^2$ is the source.
    \item $b(u,v)=(Bu,v)_{L^2}$ is the bilinear form and $\ell(v)=(f,v)_{L^2}$ is the right-hand side for all $u\in\mathbb{U}$ and all $v\in L^2$. Recalling Remark~\ref{section:particular_optimal}, by equipping $\mathbb{U}$ with the graph inner product $(u,w)_\mathbb{U} := (Bu,Bw)_{L^2}$, we have that $\rho^*=1$ is an optimal choice in Uzawa.
    \item The Ritz minimization for $r^k$ is unnecessary, as $r^k$ is explicitly available: $r^k=f-Bu^k$.
    \item The resulting variational problem on $\delta^k$, 
    $$\underbrace{(\delta^k, w)_\mathbb{U}}_{(B\delta^k,Bw)_{L^2}} = \underbrace{b(w,r^k)}_{(Bw,f-Bu^k)_{L^2}},\quad \forall w\in\mathbb{U},$$ can be reformulated as the following $L^2$-norm minimization:
    $$ \delta^k = \arg\min_{\delta\in\mathbb{U}} \Vert B\delta + Bu^k - f\Vert_{L^2}^2.$$
\end{itemize}
As a result, Approaches 1-3 simplify as follows:
    \begin{enumerate}
        \item[] \textbf{Approach 1 (S1).} Given $u^0\in\mathbb{U}$,\begin{equation}
    \left\{
    \begin{aligned}
        \delta^{k} &= \arg\min_{\delta \in \mathbb{U}} \| B\delta + Bu^k - f \|_{L^2}^2, \\[2ex]
        u^{k+1}    &= u^0 + \delta^0 + \delta^1 + \ldots + \delta^k.
    \end{aligned}
    \right.
    \label{eq:UzawaPINNs_ap1}
\end{equation}
This iterative scheme might be viewed as the multilevel neural network method proposed in \cite{aldirany2024multi}.
\item[] \textbf{Approach 2 (S2).} Given $u^k\in\mathbb U$,\begin{equation}
    \left\{
    \begin{aligned}
        \delta^{k} &=  \displaystyle\arg\min_{\delta\in \mathbb{U}}\Vert B\delta + Bu^k - f\Vert_{L^2}^2,\\[2ex]
        u^{k+1} &= \displaystyle\arg\min_{u\in\mathbb{U}} \Vert B\delta^k + Bu^k - Bu\Vert_{L^2}^2.
\end{aligned}
    \right.
\end{equation}
         \item [] \textbf{Approach 3 (S3).} We obtain a simplified version of the the deep double Ritz method proposed in \cite{uriarte2023DoubleRitz}:
\begin{align}
         u^{k+1} &=  \displaystyle\arg\min_{u\in \mathbb{U}} \frac{1}{2} \|Bu\|^2_{L^2} - (f,Bu)_{L^2}.
\label{eq:Uzawa_continuous3}
\end{align}
\end{enumerate}

\subsubsection{Weak formulation}

In symmetric and positive-definite problems, the framework reads within our Ritz-Uzawa scheme as follows:
\begin{itemize}
    \item $\mathbb{U}=\mathbb{V}$ are equipped with the inner product induced by the bilinear form $b(\cdot,\cdot)$. Thus, $B$ is the identity operator, and $\mathbb{U}$ can be read as equipped with the graph inner product (recall Remark~\ref{section:particular_optimal}).
Then, $\rho^*=1$.
    \item We have $\mathbb{V}\ni r^k = \delta^k = u^* - u^k\in\mathbb{U}$.
Then, we can remove the formulations related to $\delta^k$ and directly consider $u^{k+1} = u^k + r^k$.
\end{itemize} As a result, Approaches 1-3 simplify as follows:
    \begin{enumerate}
        \item[] \textbf{Approach 1 (W1).} Given $u^0\in\mathbb U$,\begin{equation}
    \left\{
    \begin{aligned}
        r^{k} &=  \displaystyle\arg\min_{r\in \mathbb{V}}\frac{1}{2} b(r,r) + b(u^k,r)-\ell(r),\\[2ex]
        u^{k+1} &= u^0 + r^0 + r^1 + \cdots + r^k.
\end{aligned}
    \right.
    \label{eq:Uzawa_continuous1}
\end{equation}
        \item[] \textbf{Approach 2 (W2).} Given $u^k\in\mathbb U$,\begin{equation}
    \left\{
    \begin{aligned}
        r^{k} &=  \displaystyle\arg\min_{r\in \mathbb{V}}\frac{1}{2} b(r,r) + b(u^k,r)-\ell(r),\\[2ex]
        u^{k+1} &= \displaystyle\arg\min_{u\in\mathbb{U}} b(r^k + u^k - u, r^k + u^k - u).
\end{aligned}
    \right.
    \label{eq:Uzawa_continuous2}
\end{equation}
         \item [] \textbf{Approach 3 (W3).} We recover the traditional Ritz minimization,
\begin{equation}
        u^{k+1} =  \displaystyle\arg\min_{u\in \mathbb{U}}\frac{1}{2} b\left(u,u\right)-\ell(u).
\end{equation}
    \end{enumerate}

\subsubsection{Ultra-weak formulation}\label{section:ultraweak}

In ultra-weak formulations, $\mathbb{U}=L^2$, $b(u,v)=(u,B'v)_{L^2}$ and $\ell(v)=(f,v)_{L^2}$ for all $u\in L^2$ and all $v\in \mathbb{V}$. Under this framework, the following properties hold:
\begin{itemize}
    \item By equipping $\mathbb{V}$ with the adjoint-graph inner product $(\cdot,\cdot)_{\mathbb{V}} := (B'\cdot,B'\cdot)_{L^2}$, we obtain $\rho^*=1$ as the optimal step size (recall Remark~\ref{section:particular_optimal}).
    \item The Ritz minimization for $\delta^k$ is explicitly available as $\delta^k=B'r^k$, thus avoiding the need to compute it numerically.
\end{itemize} 
As a result, Approaches 1-3 simplify as follows:
    \begin{enumerate}
        \item[] \textbf{Approach 1 (U1).} Given $u^0\in\mathbb U$,\begin{equation}
    \left\{
    \begin{aligned}
        r^{k} &=  \displaystyle\arg\min_{r\in \mathbb{V}}\frac{1}{2} (B'r,B'r)_{L^2} + b(u^k,r)-\ell(r),\\[2ex]
        u^{k+1} &= u^0 + B'r^0 + B'r^1 + \cdots + B'r^k.
\end{aligned}
    \right.
    \label{eq:UzawaUltraweak_ap1}
\end{equation}
        \item[] \textbf{Approach 2 (U2).} Given $u^k\in\mathbb U$,\begin{equation}
    \left\{
    \begin{aligned}
        r^{k} &=  \displaystyle\arg\min_{r\in \mathbb{V}}\frac{1}{2}(B'r,B'r)_{L^2} + b(u^k,r)-\ell(r),\\[2ex]
        u^{k+1} &= \displaystyle\arg\min_{u\in\mathbb{U}} \Vert B'r^k+ u^k - u\Vert_{L^2}^2.
\end{aligned}
    \right.
    \label{eq:Uzawa_continuous5}
\end{equation}
         \item [] \textbf{Approach 3 (U3).} We recover the Adjoint Ritz minimization with post-processing mentioned in \cite{uriarte2023DoubleRitz}.
Given $u^{k}\in\mathbb U$, \begin{equation}
        \left\{
    \begin{aligned}
        r &=  \arg\min_{r\in \mathbb{V}} \frac{1}{2} \Vert B'r\Vert_{L^2}^2 - \ell(r),\\[2ex]
        u^{k+1} &= B'r.
\end{aligned}
    \right.
\end{equation}
\end{enumerate}
\subsection{The inexact Ritz-Uzawa method} \label{subsection:practical_setup}

In practice, mainly due to the imprecision of minimization algorithms, the ideal minimizations described in Approaches 1, 2 and 3 cannot be solved exactly. Optimization algorithms introduce approximation errors, meaning we only recover inexact versions of the minimizing sequences. Therefore, we introduce $r^k_\varepsilon$, $\delta^k_\varepsilon$, and $u^k_\varepsilon$ to represent the quasi-minimizers at each step, where $\varepsilon>0$ is a tolerance parameter controlling the relative errors of the optimization \cite{brevis2022neural}.

\paragraph{Inexact Approach 1.}
For the summation scheme, the inexact method is defined as follows: given $u^0\in\mathbb U$,
\begin{equation}\label{eq:approach1_inexact}
    \left\{
    \begin{aligned}
        r^k_\varepsilon &\approx r^{k} =  \displaystyle \arg\min_{r\in \mathbb{V}}\frac{1}{2}\left\|r\right\|^2_{\mathbb{V}} - \ell(r)  + b(u^k,r),
\\[2ex]
        \delta^k_{\varepsilon} &\approx\delta^{k} =  \displaystyle\arg\min_{\delta\in \mathbb{U}}\frac{1}{2}\left\|\delta\right\|^2_{\mathbb{U}} - b(\delta,r^{k}_\varepsilon),\\[2ex]
        u^{k+1} &= u^0 + \rho \delta^0_{\varepsilon} + \rho \delta^1_{\varepsilon} + \cdots + \rho \delta^k_{\varepsilon},
    \end{aligned}
    \right.
\end{equation} 
where we request the optimization process to be sufficiently accurate to bound the relative errors of the residual and the correction as follows:
\begin{equation}\label{eq:relative1}
    {\|r^k_\varepsilon-r^k\|_\mathbb{V}\over\|r^k\|_\mathbb{V}}\leq \varepsilon \qquad\hbox{ and }\qquad
    {\|\delta^k_{\varepsilon}-\delta^k\|_\mathbb{U}\over\|\delta^k\|_\mathbb{U}}\leq \varepsilon.
\end{equation}
The convergence of this inexact method is established in the following theorem, based on the result from~\cite[Theorem~3.1]{BennyChackoBEV2025}.

\begin{theorem}\label{thm:approach1_inexact}
    If $\rho<2\|B\|^{-2}$ and~\eqref{eq:relative1} hold for $\varepsilon>0$ satisfying $\varepsilon^2+2\varepsilon< (1-\|I-\rho B'B\|)/\rho \|B\|^2$, then the scheme defined in~\eqref{eq:approach1_inexact} is such that $\{r^k_\varepsilon\}\to 0_\mathbb{V}$, 
    $\{\delta^k_\varepsilon\}\to 0_\mathbb{U}$ and $\{u^k\}\to u^*$.
\end{theorem}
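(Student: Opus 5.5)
The plan is to reproduce the error-recursion argument behind Proposition~\ref{prop:contraction}, now with two perturbation terms coming from the inexact minimizations. First observe that the summation in~\eqref{eq:approach1_inexact} is equivalent to the one-step update $u^{k+1}=u^k+\rho\delta^k_\varepsilon$. Setting $e^k:=u^*-u^k$, this gives
\begin{equation*}
e^{k+1}=e^k-\rho\delta^k_\varepsilon .
\end{equation*}
Two facts fix the exact quantities. The exact residual is $r^k=B e^k$. The exact correction $\delta^k$ in~\eqref{eq:approach1_inexact} is computed from the \emph{inexact} residual $r^k_\varepsilon$, so it is the Riesz representative $\delta^k=B'r^k_\varepsilon$. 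I then decompose
\begin{equation*}
\delta^k_\varepsilon = B'Be^k + B'(r^k_\varepsilon-r^k) + (\delta^k_\varepsilon-\delta^k),
\end{equation*}
which yields
\begin{equation*}
e^{k+1}=(I-\rho B'B)e^k-\rho\bigl[B'(r^k_\varepsilon-r^k)+(\delta^k_\varepsilon-\delta^k)\bigr].
\end{equation*}

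Next, I bound each perturbation in terms of $\|e^k\|_\mathbb{U}$, using~\eqref{eq:relative1} and $\|B'\|=\|B\|$.
\begin{itemize}
\item By continuity, $\|r^k\|_\mathbb{V}\le\|B\|\,\|e^k\|_\mathbb{U}$.
\item Hence $\|B'(r^k_\varepsilon-r^k)\|_\mathbb{U}\le \|B\|\,\varepsilon\|r^k\|_\mathbb{V}\le \varepsilon\|B\|^2\|e^k\|_\mathbb{U}$.
\item For the second term, the triangle inequality gives $\|r^k_\varepsilon\|_\mathbb{V}\le(1+\varepsilon)\|r^k\|_\mathbb{V}$, so $\|\delta^k\|_\mathbb{U}=\|B'r^k_\varepsilon\|_\mathbb{U}\le(1+\varepsilon)\|B\|^2\|e^k\|_\mathbb{U}$.
\item Therefore $\|\delta^k_\varepsilon-\delta^k\|_\mathbb{U}\le\varepsilon(1+\varepsilon)\|B\|^2\|e^k\|_\mathbb{U}$.
\end{itemize}
Combining these bounds gives
\begin{equation*}
\|e^{k+1}\|_\mathbb{U}\le q\,\|e^k\|_\mathbb{U},\qquad q:=\|I-\rho B'B\|+\rho\|B\|^2(\varepsilon^2+2\varepsilon).
\end{equation*}
Proposition~\ref{prop:contraction} gives $\|I-\rho B'B\|<1$ when $\rho<2\|B\|^{-2}$. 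Reading the hypothesis as $\varepsilon^2+2\varepsilon<(1-\|I-\rho B'B\|)/(\rho\|B\|^2)$, it is exactly $q<1$. Hence $e^k\to0$ geometrically, i.e.\ $u^k\to u^*$.

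The remaining limits follow from the bounds already derived. We have $\|r^k_\varepsilon\|_\mathbb{V}\le(1+\varepsilon)\|B\|\,\|e^k\|_\mathbb{U}\to0$. Likewise $\|\delta^k_\varepsilon\|_\mathbb{U}\le(1+\varepsilon)\|\delta^k\|_\mathbb{U}\le(1+\varepsilon)^2\|B\|^2\|e^k\|_\mathbb{U}\to0$. This is consistent with Proposition~\ref{prop:equivalences}.

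The step needing most care is the second perturbation term. Because $\delta^k$ is defined from $r^k_\varepsilon$ rather than from $r^k$, the relative tolerance on $\delta^k_\varepsilon$ must be converted into a bound in terms of $\|e^k\|_\mathbb{U}$ through $\|r^k_\varepsilon\|_\mathbb{V}\le(1+\varepsilon)\|r^k\|_\mathbb{V}$. This conversion is what produces the $\varepsilon^2$ term in the hypothesis. If instead $\delta^k$ were meant to be $B'r^k$, the same argument would go through with slightly different constants.
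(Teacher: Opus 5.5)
Your proposal is correct and follows the paper's proof essentially step for step. It uses the same recursion $e^{k+1}=(I-\rho B'B)e^k$ perturbed by $\rho B'(r^k_\varepsilon-r^k)$ and $\rho(\delta^k_\varepsilon-\delta^k)$ with $\delta^k=B'r^k_\varepsilon$, the same bounds producing the factor $\varepsilon^2+2\varepsilon$, and the same estimates $\|r^k_\varepsilon\|_\mathbb{V}\le(1+\varepsilon)\|B\|\,\|e^k\|_\mathbb{U}$ and $\|\delta^k_\varepsilon\|_\mathbb{U}\le(1+\varepsilon)^2\|B\|^2\|e^k\|_\mathbb{U}$ for the remaining limits; the only difference is your sign convention $e^k=u^*-u^k$ versus the paper's $e^k=u^k-u^*$.
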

\begin{proof}
     See Appendix~\ref{sec:proof_thm1}. 
\end{proof} 

\paragraph{Inexact Approach 2.}
When replacing the summation of  the inexact Approach 1 with a minimization of the $\mathbb{U}$-norm to avoid dynamically adding previous iteration outcomes, the inexact method reads as follows: given $u^k\in\mathbb U$,
\begin{equation}\label{eq:approach2_inexact}
    \left\{
    \begin{aligned}
        r^k_\varepsilon&\approx r^{k} =  \displaystyle \arg\min_{r\in \mathbb{V}}\frac{1}{2}\left\|r\right\|^2_{\mathbb{V}} - \ell(r)  + b(u^k_\varepsilon,r), \\[2ex]
        \delta^k_{\varepsilon}&\approx\delta^{k} =  \displaystyle\arg\min_{\delta\in \mathbb{U}}\frac{1}{2}\left\|\delta\right\|^2_{\mathbb{U}} - b(\delta,r^{k}_\varepsilon),\\[2ex]
        u^{k+1}_\varepsilon&\approx    u^{k+1} = \displaystyle\arg\min_{u\in\mathbb{U}} \Vert u - (u^k_\varepsilon + \rho \delta^k_\varepsilon)\Vert_{\mathbb{U}}^2\,.
    \end{aligned}
    \right.
\end{equation} 
The integrand of this final minimization tends to zero if we use a warm-start strategy, initializing the network parameters of $u^{k+1}_\varepsilon$ with those from $u^k_\varepsilon$. In addition to the bounds in~\eqref{eq:relative1}, we must also control the following relative error of this final update:
\begin{equation}\label{eq:u_relative_err}
{\|u^{k+1}_\varepsilon-u^{k+1}\|_\mathbb{U}\over \|\rho\delta_\varepsilon^k\|_\mathbb{U}}\leq \varepsilon.
\end{equation}
If $u^k_\varepsilon$ is poorly trained, the updated function $u^{k+1}_\varepsilon$ may not improve the solution, and the error could even increase. The following theorem establishes the convergence of this approach under specific tolerance conditions.

\begin{theorem}\label{thm:approach2_inexact}
    If $\rho<2\|B\|^{-2}$ and~\eqref{eq:relative1}-\eqref{eq:u_relative_err} hold for $\varepsilon>0$ satisfying $\varepsilon^3+3\varepsilon^2+3\varepsilon< (1-\|I-\rho B'B\|)/\rho \|B\|^2$, then the inexact method~\eqref{eq:approach2_inexact} satisfies $\{r^k_\varepsilon\}\to 0_\mathbb{V}$, $\{\delta^k_\varepsilon\}\to 0_\mathbb{U}$ and $\{u^k_\varepsilon\}\to u^*$.
\end{theorem}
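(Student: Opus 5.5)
The argument tracks the error $e^k_\varepsilon := u^*-u^k_\varepsilon$ and writes one step of~\eqref{eq:approach2_inexact} as the exact Uzawa step plus three perturbations. The goal is a recursion
$$
\|e^{k+1}_\varepsilon\|_\mathbb U \le \big(\|I-\rho B'B\| + \rho\|B\|^2 c(\varepsilon)\big)\,\|e^k_\varepsilon\|_\mathbb U ,
\qquad c(\varepsilon)=\varepsilon^3+3\varepsilon^2+3\varepsilon=(1+\varepsilon)^3-1 .
$$
The hypothesis on $\varepsilon$ makes the factor strictly less than one, so $e^k_\varepsilon\to 0$ geometrically. The remaining conclusions then follow from the continuity bounds used in the proof of Proposition~\ref{prop:equivalences}.

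First, let $r^k=B e^k_\varepsilon$ and $\delta^k = B'r^k = B'Be^k_\varepsilon$ be the exact quantities built from $u^k_\varepsilon$, as in~\eqref{eq:approach2_inexact}. Let $\hat\delta^k := B' r^k_\varepsilon$ be the exact Riesz correction associated with the inexact residual. Since the second line of~\eqref{eq:approach2_inexact} minimizes with $r^k_\varepsilon$, I read the relative bound~\eqref{eq:relative1} for $\delta^k_\varepsilon$ as being measured against $\hat\delta^k$, following \cite{BennyChackoBEV2025}. The update minimizer is explicit: $u^{k+1}=u^k_\varepsilon+\rho\delta^k_\varepsilon$. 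Hence
$$
e^{k+1}_\varepsilon=(I-\rho B'B)e^k_\varepsilon-\rho(\hat\delta^k-\delta^k)-\rho(\delta^k_\varepsilon-\hat\delta^k)-(u^{k+1}_\varepsilon-u^{k+1}).
$$

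Next, bound each perturbation in terms of $\|e^k_\varepsilon\|_\mathbb U$ using $\|B'\|=\|B\|$:
\begin{itemize}
\item $\|\hat\delta^k-\delta^k\|_\mathbb U\le\|B\|\,\|r^k_\varepsilon-r^k\|_\mathbb V\le \varepsilon\|B\|\,\|r^k\|_\mathbb V\le\varepsilon\|B\|^2\|e^k_\varepsilon\|_\mathbb U$.
\item $\|\hat\delta^k\|_\mathbb U\le\|B\|(1+\varepsilon)\|r^k\|_\mathbb V$, and therefore $\|\delta^k_\varepsilon-\hat\delta^k\|_\mathbb U\le\varepsilon(1+\varepsilon)\|B\|^2\|e^k_\varepsilon\|_\mathbb U$.
\item $\|\delta^k_\varepsilon\|_\mathbb U\le(1+\varepsilon)^2\|B\|^2\|e^k_\varepsilon\|_\mathbb U$, so~\eqref{eq:u_relative_err} gives $\|u^{k+1}_\varepsilon-u^{k+1}\|_\mathbb U\le\varepsilon\rho(1+\varepsilon)^2\|B\|^2\|e^k_\varepsilon\|_\mathbb U$.
\end{itemize}
Summing the three coefficients gives $\varepsilon+\varepsilon(1+\varepsilon)+\varepsilon(1+\varepsilon)^2=(1+\varepsilon)^3-1$. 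This is exactly the cubic in the statement, which confirms the bookkeeping. Combined with Proposition~\ref{prop:contraction}, it yields the recursion above with contraction factor $q<1$, so $\|e^k_\varepsilon\|_\mathbb U\le q^k\|e^0_\varepsilon\|_\mathbb U\to0$.

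Finally, the same estimates give $\|r^k_\varepsilon\|_\mathbb V\le(1+\varepsilon)\|B\|\,\|e^k_\varepsilon\|_\mathbb U\to0$ and $\|\delta^k_\varepsilon\|_\mathbb U\le(1+\varepsilon)^2\|B\|^2\|e^k_\varepsilon\|_\mathbb U\to 0$.

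The main obstacle is the interpretation of the reference quantities in~\eqref{eq:relative1}. There is a question of whether $\delta^k$ there denotes the correction from the exact residual or from the inexact one. If $\delta^k$ is the exact $B'Be^k_\varepsilon$, the $\delta$-tolerance directly controls the full correction error. The bound then becomes $\varepsilon+\varepsilon(1+\varepsilon)$, which is no larger than the stated cubic, so the conclusion still holds. I would present the reading that reproduces the cubic, which also mirrors the proof of Theorem~\ref{thm:approach1_inexact}, and note that the other reading only improves the constant. A minor point is to check that minimizing the $\mathbb U$-norm in the update step really has the explicit minimizer $u^k_\varepsilon+\rho\delta^k_\varepsilon$ in the ideal space. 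This is immediate because the minimization is over all of $\mathbb U$.
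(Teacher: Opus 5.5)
Your proposal is correct and is essentially the paper's proof. It uses the same splitting of $e^{k+1}_\varepsilon$ into $(I-\rho B'B)e^k_\varepsilon$ plus the update, correction, and residual perturbations, bounded respectively by $\rho\varepsilon\|\delta^k_\varepsilon\|_\mathbb{U}$, $\rho\varepsilon\|B'r^k_\varepsilon\|_\mathbb{U}$ and $\rho\varepsilon\|B\|\,\|r^k\|_\mathbb{V}$, which sum to the factor $(1+\varepsilon)^3-1$. Your reading of the $\delta$-tolerance in \eqref{eq:relative1} as measured against $B'r^k_\varepsilon$ (your $\hat\delta^k$) is exactly how the paper uses it (cf.\ \eqref{eq:dk}), and the opposite sign convention for $e^k_\varepsilon$ makes no difference.
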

\begin{proof}
     See Appendix~\ref{sec:proof_thm2}. 
\end{proof}

\paragraph{Inexact Approach 3.}
The inexact version of this two-step scheme corresponds to the method analyzed in~\cite{BennyChackoBEV2025}. Introducing the neural network approximations $r^k_\varepsilon$ and $u^k_\varepsilon$, the method reads:
\begin{equation}\label{eq:approach3_inexact}
    \left\{
    \begin{aligned}
        r^{k}_\varepsilon &\approx  \displaystyle \arg\min_{r\in \mathbb{V}}\frac{1}{2}\left\|r\right\|^2_{\mathbb{V}} - \ell(r)  + b(u^k_\varepsilon,r), \\[2ex]
        u^{k+1}_\varepsilon &\approx  \displaystyle\arg\min_{u\in \mathbb{U}}\frac{1}{2}\left\|u\right\|^2_{\mathbb{U}} - (u^{k}_\varepsilon,u)_{\mathbb{U}} - \rho b(u,r^{k}_\varepsilon).
\end{aligned}
    \right.
\end{equation}
As established in~\cite[Theorems 3.1 and 3.2]{BennyChackoBEV2025}, this iterative scheme converges provided the inexact inner updates satisfy relative error bounds analogous to~\eqref{eq:relative1} and~\eqref{eq:u_relative_err}, ensuring they consistently move in the correct descent direction.

\section{Neural network setting}\label{sec:architecture_and_training}

\subsection{Architecture} To construct approximations of the trial and test spaces $\mathbb{U}$ and $\mathbb{V}$, we parameterize the solutions using neural networks. In order to efficiently capture a broad range of frequencies, we employ a modified architecture defined as follows:

\begin{itemize}
    \item[\textbf{A)}] \textbf{First layer (Fourier feature mapping).} Let $\mathbf{x} \in \Omega \subset \mathbb{R}^d$ be the input vector.  To capture high-frequency features \cite{tancik2020fourier}, we define the mapping $\gamma_{\boldsymbol{\kappa}}: \Omega \to \mathbb{R}^n$ using trainable weights $\mathbf{w}_j \in \mathbb{R}^d$, biases $b_j \in \mathbb{R}$ for $j=1,2,...,n$, and a frequency vector $\boldsymbol{\kappa}=(\kappa_1,...,\kappa_n) \in \mathbb{R}^n$ that will be specified in Section~\ref{subsec:NCPSD}). Then, the output of the first layer is defined as
    \begin{equation}
        \mathbf{z}_1 := \gamma_{\boldsymbol{\kappa}}(\mathbf{x}) = \big( 
        \sin(\kappa_1 (\mathbf{w}_1^\top \mathbf{x} + b_1)), \dots,
        \sin(\kappa_n (\mathbf{w}_n^\top \mathbf{x} + b_n)) \big).
    \end{equation}
    
    \item[\textbf{B)}] \textbf{Hidden layers.} Let $\sigma$ be a non-linear activation function (e.g., $\tanh$). For each layer $l$, with trainable weights $\mathbf{W}_l \in \mathbb{R}^{n \times n}$ and biases $\mathbf{b}_l \in \mathbb{R}^n$, we define the remaining intermediate hidden outputs as
    \begin{equation}
        \mathbf{z}_l := \sigma(\mathbf{W}_l \mathbf{z}_{l-1} + \mathbf{b}_l).
    \end{equation}
\end{itemize}

\paragraph{Output generator functions.} If necessary, to strongly satisfy the homogeneous Dirichlet boundary conditions, we multiply the network output $\mathbf{z}_L$ element-wise by a smooth cut-off function $\xi(\mathbf{x})$ that vanishes on the portion of the boundary where Dirichlet boundary conditions are imposed \cite{BerronePintore2023Enforcing}. This defines the functions $\{\phi_j\}_{j=1}^n$ as
\begin{equation}
    \phi_j(\mathbf{x}) = (\mathbf{z}_L(\mathbf{x}))_j \cdot \xi(\mathbf{x}).
\end{equation}

\paragraph{Linear combination.}
The final output is the linear combination between the above functions $\{\phi_j\}_{j=1}^n$ and a set of output weights denoted $\mathbf{w}_{out} := (w_{out, j})_{j=1}^n \in \mathbb{R}^n$:
\begin{equation}
    v_\theta(\mathbf{x}) = \sum_{j=1}^n w_{out, j} \, \phi_j(\mathbf{x}).
\end{equation}
Thus, the set of trainable parameters is $\theta:=\{ \mathbf{w}_{out} \} \cup \theta_H$, where $\theta_H := \{ \{\mathbf{w}_j, b_j \}_{j=1}^n , \bigcup_{l=2}^L \{ \mathbf{W}_l, \mathbf{b}_l \} \}$ denotes the subset of hidden parameters.

\subsection{Parameter initialization via spectral analysis} \label{subsec:NCPSD}

Neural networks suffer from \textit{spectral bias} \cite{Cao2021Spectral, Rahaman2019Spectral}, effectively acting as low-pass filters that favor smooth approximations over high-frequency components. While Fourier Feature Mapping mitigates this issue through tunable frequency parameters $\boldsymbol{\kappa}=(\kappa_1, \dots, \kappa_n)$, standard initialization strategies often fail to target the specific spectral content of the solution. To address this, we employ a data-driven tuning strategy based on the \textit{power spectral density} (PSD) (see, e.g., \cite{brunton2022datadriven, kim2024cumulative}).

{Given a function $g$ evaluated on a discrete computational grid (which in our framework represents either a residual or a Riesz representative)}, we compute its discrete Fourier transform (DFT) using the Fast Fourier Transform (FFT). To quantify the spectral energy distribution up to a frequency threshold $\eta>0$, we define the \textit{normalized cumulative power spectral density} (NCPSD) as follows:
\begin{equation} \label{eq: discrete_NCPSD}
    \text{NCPSD}_s(g)(\eta) := \frac{\displaystyle \sum_{|\omega_m| \le \eta} (1 + |\omega_m|^2)^s |\hat{g}_m|^2}{\displaystyle \sum_{m=1}^{M} (1 + |\omega_m|^2)^s |\hat{g}_m|^2},
\end{equation}
where $\hat{g}_m$ denotes the $m$-th coefficient of the DFT of $g$, $\omega_m$ is the corresponding discrete angular frequency, {$M$ is the number of samples used to discretize $g$}, and $s \in \mathbb{Z}$ is an integer index that dictates frequency weighting.

The choice of the weighting factor $(1 + |\omega_m|^2)^s$ is motivated by Plancherel's theorem. Indeed, the Sobolev norm $\|g\|_{H^s(\mathbb{R}^d)}^2$ of a generic function is equivalent to a frequency-weighted $L^2$-norm:
\begin{equation}\label{eq: continuous_plancherel}
    \|g\|_{H^s(\mathbb{R}^d)}^2 \cong \int_{\mathbb{R}^d }(1+|\omega|^2)^s |\hat{g}(\omega)|^2 d\omega.
\end{equation}
Thus, the integer index $s$ dictates the regularity regime: non-negative values ($s \geq 0$) correspond to standard Sobolev spaces of functions, whereas negative values ($s < 0$) characterize dual spaces of distributions. Although the spectral representation of $H^s$ norms on bounded domains with imposed boundary conditions becomes involved, the discrete estimator defined in \eqref{eq: discrete_NCPSD} acts as a computationally tractable surrogate. {In the context of a Poisson-type elliptic problem, we select $s$ explicitly depending on the variational formulation being solved:
\begin{itemize}
    \item \textbf{Strong formulation.} The residual is considered as an element of $L^2$ ($s=0$), while the Riesz representative of the correction in the trial space is evaluated in $H^2$ ($s=2$).
    \item \textbf{Weak formulation.} The residual is considered as an element of the dual test space $H^{-1}$ ($s=-1$), while the Riesz representative of the correction in the trial space is evaluated in $H^1_0$ ($s=1$).
    \item \textbf{Ultra-weak formulation.} The residual is considered as an element of the dual test space $H^{-2}$ ($s=-2$), while the Riesz representative of the correction in the trial space is evaluated in $L^2$ ($s=0$).
\end{itemize}
}
By analyzing the NCPSD, we identify the effective bandwidth $[\omega_{\min}, \omega_{\max}]$ containing a target energy fraction $\alpha \in (0,1)$ (e.g., $\alpha = 0.05$). To uniquely define this interval and discard negligible energy at both extremes, we truncate the tails of the distribution symmetrically. Specifically, we choose the cut-off frequencies $\omega_{\min}$ and $\omega_{\max}$ such that:
\begin{equation}
    \text{NCPSD}_s(g)(\omega_{\min}) = \alpha \quad \text{and} \quad \text{NCPSD}_s(g)(\omega_{\max}) = 1-\alpha.
\end{equation} 
To ensure a multiscale representation, the frequency scalars $\kappa_j$ are sampled from a log-uniform distribution over this interval, i.e.,
\begin{equation}
    \ln(\kappa_j) \sim \mathcal{U}(\ln(\omega_{\min}), \ln(\omega_{\max})).
\end{equation}

On the other hand, regarding the spatial weights $\mathbf{w}_j$ and biases $b_j$, we follow the initialization scheme proposed for implicit neural representations \cite{sitzmann2020implicit}. Specifically, $\mathbf{w}_j$ are drawn from a uniform distribution $\mathcal{U}(-1,1)$ to ensure spectral coverage, while $b_j$ are sampled uniformly in $(-\pi, \pi)$. This combination guarantees that the output generators $\{\phi_j\}_{j=1}^n$ possess the necessary resolving power to approximate the dominant error modes from the onset of training. Finally, for the parameters of the subsequent hidden layers, we employ the Glorot uniform initialization when using the $\tanh$ activation function, and the He uniform initialization when employing $\text{ReLU}^3$.

\subsection{Training: hybrid LS/Adam optimization with normalization}

The training process aims to find an optimal set of parameters $\theta$ that minimize a given loss functional $\mathcal{L}(u_\theta)$.
To accelerate convergence, we employ a hybrid optimization strategy that decouples the linear and non-linear parameters (cf. \cite{uriarte2025vpinnsLS, Cyr2019RobustTA}).

\paragraph{Least squares step.}
For a fixed set of hidden trainable parameters $\theta_H$, the functions $\{\phi_j\}_{j=1}^n$ are fixed.
We notice that in every case (i.e., strong, weak, ultra-weak) the involved loss function is quadratic with respect to the linear output weights $\mathbf{w}_{out}$.
Hence, each loss can be expressed in algebraic form as:
\begin{equation}
    \mathcal{L}(\mathbf{w}_{out}) = \frac{1}{2} \mathbf{w}_{out}^\top \mathbf{H} \mathbf{w}_{out} - \mathbf{f}^\top \mathbf{w}_{out} + q,
\end{equation}
where $\mathbf{H} \in \mathbb{R}^{n \times n}$ is a Hessian matrix, $\mathbf{f} \in \mathbb{R}^n$ is a load vector, and $q \in \mathbb{R}$ is a scalar. When $\mathbf{H}$ is non-singular, minimizing this quadratic form is equivalent to solving the linear system $\mathbf{H} \mathbf{w}_{out} = \mathbf{f}$.

\paragraph{Output generators normalization.}
The Hessian matrix $\mathbf{H}$ may be singular due to linear dependencies of the output generators.
Standard Tikhonov regularization $(\mathbf{H} + \lambda \mathbf{I})$ can be ineffective if the spectrum of $\mathbf{H}$ is arbitrary, as the regularization parameter $\lambda$ might disproportionately affect relevant eigenvalues or fail to stabilize small ones.
To address this, we apply \textit{diagonal scaling} (normalization) to bound the maximum eigenvalue of the system.
Let $s_j = \sqrt{\mathbf{H}_{jj}}$ be the scaling factors and $\mathbf{S} = \text{diag}(s_1, \dots, s_n)$.

The normalized system is given by:
\begin{equation}
    \tilde{\mathbf{H}} \tilde{\mathbf{w}} = \tilde{\mathbf{f}}, \quad \text{with} \quad \tilde{\mathbf{H}} = \mathbf{S}^{-1}\mathbf{H}\mathbf{S}^{-1}, \quad \tilde{\mathbf{f}} = \mathbf{S}^{-1}\mathbf{f}.
\end{equation}
This normalization guarantees that the maximum eigenvalue is bounded by the network width ($\lambda_{\max}(\tilde{\mathbf{H}}) \leq \text{Tr}(\tilde{\mathbf{H}}) = n$). Consequently, this spectral bound ensures that the regularization term $\lambda \mathbf{I}$ applied to $\tilde{\mathbf{H}}$ preserves most of the information of the eigenmodes associated with eigenvalues larger than $\lambda$.
In this way, we solve $(\tilde{\mathbf{H}} + \lambda \mathbf{I})\tilde{\mathbf{w}} = \tilde{\mathbf{f}}$ and recover the initial weights via $\mathbf{w}_{out} = \mathbf{S}^{-1}\tilde{\mathbf{w}}$.

\paragraph{Adam step.}
After updating $\mathbf{w}_{out}$, we fix the linear layer and perform a gradient descent step (Adam) on the hidden parameters $\theta_H$ to minimize $\mathcal{L}(u_{\theta_H})$. Note that, with a slight abuse of notation, we use $\mathcal{L}$ throughout this section to denote the loss evaluated with respect to the specific subset of parameters (or functions) being optimized at each respective step. 

We describe this process across multiple epochs, Algorithm~\ref{alg:hybrid_training_loop} introduces the superscript $(k)$ to denote the state of the parameters at the $k$-th epoch. Specifically, the least-squares step at epoch $k$ utilizes the hidden parameters $\theta_H^{(k-1)}$ from the previous iteration to generate the basis functions and compute the updated linear weights $\mathbf{w}_{out}^{(k)}$. Subsequently, the Adam step uses these optimal weights to update the hidden representations to $\theta_H^{(k)}$. Finally, a concluding least-squares step is performed after the optimization loop finishes.

\begin{algorithm}[H]
\caption{Full hybrid LS/Adam training loop}
\begin{algorithmic}[1]
    \State \textbf{Input:} Initial hidden parameters $\theta_H^{(0)}$, regularization $\lambda$, total epochs $N_{epochs}$.
    \For{$k = 1$ \textbf{to} $N_{epochs}$}
        \State \textbf{1. Least-squares step (Output Layer):}
        \State \quad Generate basis $\{\phi_j\}$ using $\theta_H^{(k-1)}$ and assemble $\mathbf{H}, \mathbf{f}$ such that $\mathcal{L} \approx \frac{1}{2}\mathbf{w}^\top \mathbf{H} \mathbf{w} - \mathbf{f}^\top \mathbf{w}$.
        \State \quad Compute scaling $\mathbf{S} = \text{diag}(\sqrt{H_{jj}})$.
        \State \quad Solve $(\mathbf{S}^{-1}\mathbf{H}\mathbf{S}^{-1} + \lambda \mathbf{I})\tilde{\mathbf{w}} = \mathbf{S}^{-1}\mathbf{f}$.
        \State \quad Update output weights: $\mathbf{w}_{out}^{(k)} \gets \mathbf{S}^{-1}\tilde{\mathbf{w}}$.
        
        \State \textbf{2. Adam Step (Hidden Layers):}
        \State \quad Compute $\nabla_{\theta_H} \mathcal{L}$ using the updated $\mathbf{w}_{out}^{(k)}$.
        \State \quad Update hidden parameters: $\theta_H^{(k)} \gets \text{Adam}(\theta_H^{(k-1)}, \nabla_{\theta_H} \mathcal{L})$.
    \EndFor
    
    \State \textbf{3. Final least-squares step:}
    \State \quad Re-assemble $\mathbf{H}$ and $\mathbf{f}$ using the final hidden parameters $\theta_H^{(N_{epochs})}$.
    \State \quad Solve for the final optimal weights $\mathbf{w}_{out}^*$ following the scaled procedure (Lines 5-7).
    
    \State \textbf{Output:} Optimized parameters $\theta_H^* = \theta_H^{(N_{epochs})}$ and $\mathbf{w}_{out}^*$.
\end{algorithmic}
\label{alg:hybrid_training_loop}
\end{algorithm}

\section{Loss discretization} \label{sec:Loss_Discretization} 

All of the above formulations consider losses that require the evaluation of integrals over the computational domain $\Omega$. As shown in \cite{Rivera2022}, employing a deterministic quadrature rule for the deep Ritz method (DRM) can lead to catastrophic overfitting. The loss functions used in Approach~1 (see Section~\ref{subsection:main_approach1}), Approach~2 (see Section~\ref{subsection:main_approach2}) and Approach~3 (see Section~\ref{subsection:main_approach3}) are based on the DRM. Furthermore, it is established in \cite{taylor2025stochastic} that even in the best-case scenario where the optimizer converges to a local minimum, the bias inherent in quadrature rules can yield a poor approximation of the solution. To mitigate these issues, we employ unbiased stochastic quadrature rules. Now, we recall the discussion given in \cite{taylor2025stochastic}.

\subsection{Vanilla Monte Carlo}
This method approximates the integral by averaging the values of the integrand at a set of $N$ points $\{x_n\}_{n=1}^{N}$, sampled independently from a uniform distribution on $\Omega$. The approximation is given by:
\begin{align*}
    \int_{\Omega} L(x)dx \approx \frac{|\Omega|}{N}\sum_{n=1}^{N} L(x_n).
\end{align*}
This quadrature rule provides an unbiased estimator of the integral, i.e., $$\mathbb{E}\left [\frac{|\Omega|}{N}\sum_{n=1}^{N} L(x_n)\right] = \int_{\Omega} L(x)dx.$$ Furthermore, its variance is given by:
\begin{equation}
    \operatorname{Var}\left [ \frac{|\Omega|}{N}\sum_{n=1}^{N} L(x_n)\right] = \frac{|\Omega|^2}{N} \operatorname{Var}[L(x)] =\frac{|\Omega|}{N} \int_{\Omega} (L(x))^2 \, dx - \frac{1}{N} \left( \int_{\Omega} L(x) \, dx \right)^2.
\end{equation}
\subsection{Stratified Monte Carlo methods}
In this approach, the integration domain $\Omega$—here considered a 1D interval for simplicity—is partitioned into $K$ disjoint subintervals, such that $\Omega = I_1 \sqcup \dots \sqcup I_K$. This methodology can be extended to higher-dimensional meshes. The integral over $\Omega$ is decomposed into a sum, where each term is mapped to a common reference interval $I=(-1,1)$:
\begin{align*}
    \int_\Omega L(x) dx  = \sum_{i=1}^K \int_{I_i} L(x) dx  = \sum_{i=1}^K |I_i| \int_{I} L_i(\hat{x}) d\hat{x}.
\end{align*}
Here, $L_i(\hat{x})$ denotes the transformed integrand corresponding to the subinterval $I_i$. The samples within each element must be chosen independently and identically distributed (i.i.d.) and must remain independent across the different subintervals. To approximate the integral over the reference interval, we consider the following unbiased stochastic quadrature rules applied to a generic function $L(\hat{x})$.
\subsubsection*{Order-3 unbiased quadrature rule ($P_3$)}
A random point $x_1$ is sampled from the interval $(0,1)$ according to the probability density function $\mathcal{P}(x) = 3x^2$. The integral is then approximated by the following rule:
\begin{align*}
    \int_I L(\hat{x}) d\hat{x} \approx \frac{L(x_1) - 2L(0) + L(-x_1)}{3x_1^2} + 2L(0), \quad \text{where } x_1 \sim \mathcal{P}(x)=3x^2 \text{ on } (0,1).
\end{align*}
Consequently, for a partition of $K$ subintervals, the total number of collocation points is $N_K = 3K$. The quadrature rule is exact for cubic functions (see~\cite{Siegel1985UnbiasedMontecarlo}) and, when $L$ is $\mathcal{C}^4$, the variance scales as $\mathcal{O}((N_K)^{-9})$ (cf.~\cite{taylor2025stochastic}).

\subsection{Variance analysis}
We analyze the asymptotic behavior of the variance of the stochastic gradient estimators for the three proposed formulations within Approach~1. A key distinction arises between the strong formulation and the variational (weak and ultra-weak) formulations regarding the {\em passive variance reduction} properties of the gradient of the loss: while in the strong formulation we observe that the variance of the gradient of the loss tends to zero as we converge, this is not the case for the weak and ultra-weak formulations.

\paragraph{Strong formulation (S1).}
In the strong formulation (see Section \ref{section:PINNs}), the residual of the current iterate $u^k$ is explicitly given by $r^k = f - Bu^k$. The correction $\delta^k$ is obtained by minimizing the $L^2$-norm of the discrepancy between $B\delta$ and this residual, i.e., $\mathcal{L}(\delta) = \|B\delta - r^k\|_{L^2}^2$. The gradient with respect to the network parameters $\theta$ is approximated using Stratified Monte Carlo integration:
\begin{align*} \label{sec4.4:dloss}
    \nabla_\theta \mathcal{L} = \int_\Omega (B\delta-r^k)\partial_\theta B\delta\,dx
    \approx 
    \sum_{i=1}^{N_K} 2 w_i (B\delta(x_i) - r^k(x_i)) \cdot \partial_\theta (B\delta(x_i)).
\end{align*}
Assuming commutativity of $\partial_\theta$ and $B$, as $\delta \in \mathbb{U}$, it holds that $\partial_\theta\delta\in\mathbb{U}$, so we may understand $v=B(\partial_\theta\delta)\in L^2(\Omega)$ as a test function, so that the gradient of the loss acts like the residual acting on a test function via
\begin{equation}
    \nabla_\theta \mathcal{L} =  \int_\Omega (B\delta-r^k)v\,dx \approx \sum_{i=1}^{N_K} 2 w_i (B\delta(x_i) - r^k(x_i)) \cdot v(x_i).
\end{equation}
From this, we estimate the variance assuming that $v$ admits a uniform bound $|v(x)|<C$ and $w_i>0$ for any $i=1,...,N_K$. We have, via Cauchy--Schwarz
\begin{align*}
\mathbb{E}\left(\left|\sum_{i=1}^{N_K} 2 w_i (B\delta(x_i) - r^k(x_i)) \cdot v(x_i)\right|^2\right) 
&\leq 4\mathbb{E}\left[ \left(\sum_{i=1}^{N_K} w_i |B\delta(x_i) - r^k(x_i)|^2\right) \left(\sum_{i=1}^{N_K} w_i |v(x_i)|^2\right) \right] \\
&\leq 4\sup_{x \in \Omega} |v(x)|^2 \left(\sum_{i=1}^{N_K} w_i\right) \mathbb{E}\left[ \sum_{i=1}^{N_K} w_i |B\delta(x_i) - r^k(x_i)|^2 \right] \\
&= 4C^2 |\Omega| \int_\Omega |B\delta(x) - r^k(x)|^2\,dx.
\end{align*}
This quantity is an upper bound for the variance, and so we see that the variance decreases as the exact loss decreases, since $B\delta-r^k \to 0$ in the $L_2$ norm, thus, yielding a passive reduction of variance. 

\paragraph{Weak formulation (W1).}
Consider the loss function $\mathcal{L}(r) = \frac{1}{2}b(r,r) + b(u^k,r) - \ell(r)$. For a standard elliptic operator where $b(u,v)=\int \nabla u \cdot \nabla v $ and $\ell(v)=(f,v)$ the gradient with respect to a trainable parameter $\theta$ is given by the integral of:
\begin{equation}
    \nabla_\theta \mathcal{L} = b(u,v)-\ell(v) = \int_{\Omega} \left( \nabla u(x) \cdot \nabla v(x) - f(x)v(x) \right) \, dx,
\end{equation}
where $v = \partial_\theta r$ acts as a test function\footnote{It is crucial to note that the convergence of the residual $r \to 0$ does not imply that its sensitivity with respect to the parameters vanishes (i.e., $v = \partial_\theta r \not\to 0$). For instance, if $r(x; \theta) = \theta \phi(x)$ with $-\Delta \phi = f$, then at the optimum $\theta=0$, the residual vanishes but the test function $v=\phi$ does not, generating variance even at the exact solution.}. We denote the Vanilla Monte Carlo rule  for this gradient by $\hat{g} = \frac{|\Omega|}{N} \sum_{i=1}^N \nabla_\theta L(u_\theta(x_i))$. The variance of this estimator is:
\begin{equation}
    \operatorname{Var}[\hat{g}] = \frac{|\Omega|^2}{N} \operatorname{Var}[\nabla_\theta L(x)] = \frac{|\Omega|^2}{N} \left( \mathbb{E}[|\nabla_\theta L(x)|^2] - |\mathbb{E}[\nabla_\theta L(x)]|^2 \right).
\end{equation}
At the exact solution, the weak form is satisfied; thus, the expected value of the gradient is zero ($\mathbb{E}[g(x)] = 0$). Therefore, the variance reduces to the second moment of the integrand:
\begin{equation}
    \operatorname{Var}[\hat{g}] = \frac{|\Omega|}{N} \int_{\Omega} | \nabla u(x) \cdot \nabla v(x) - f(x)v(x) |^2 \, dx.
\end{equation}
Since the integrand is generally non-zero, the variance remains strictly positive ($\operatorname{Var}[\hat{g}] > 0$) and possibly large, even at the exact solution.

\paragraph{Ultra-weak formulation (U1).}
Similarly, for the ultra-weak formulation, the gradient estimator involves the adjoint operator $B'$. For the Poisson problem, the gradient with respect to a trainable parameter $\theta$ is given by:
\begin{equation}
    \nabla_\theta \mathcal{L} = (u,B'v)_{L^2}-(f,v)_{L^2} = \int_{\Omega} \left(- u(x)(\Delta v)(x) - f(x)v(x) \right) \, dx,
\end{equation}
where $v = \partial_\theta r$. The variance of the Monte Carlo estimator at the exact solution is given by:
\begin{equation}
    \operatorname{Var}[\hat{g}] = \frac{|\Omega|}{N} \int_{\Omega} | -u(x)\Delta v(x) - f(x)v(x) |^2 \, dx.
\end{equation}
Again, the condition $\nabla_\theta \mathcal{L}  = 0$ (global satisfaction of the ultra-weak form) does not enforce that the integrand vanishes pointwise. Therefore, the stochastic gradient estimator suffers from persistent variance.

\section{Numerical experiments}\label{section:Numerical_experiments}

In this section, we evaluate the performance of the Ritz--Uzawa Neural Networks (RUNNs) framework across different variational settings.  Throughout these experiments, we use the $\arg\min$ operator over the neural network parameters $\theta$ to denote the optimal parameters found during the training process. We remark that this constitutes a slight abuse of notation; due to the highly non-linear parameter space and the use of numerical optimizers, the $\arg\min$ should be understood as the practical approximation obtained after training, rather than a strict global minimum.

\subsection{Weak formulation: the Poisson problem}

We consider the domain $\Omega = (-1, 1) \subset \mathbb{R}$ and the Poisson problem with homogeneous Dirichlet boundary conditions:
\begin{equation}
\left\{
\begin{aligned}
    -u'' &= f & &\text{in } \Omega, \\
    u(-1) = u(1) &= 0, &
\end{aligned}
\right.
\label{eq:Elliptic-ode-weak}
\end{equation}
where the source term $f$ is chosen such that the exact manufactured solution is known. We employ the Sobolev spaces $\mathbb{U}=\mathbb{V}=H^1_{0}(\Omega)$, with the weak bilinear form $b(u,v)=(u',v')_{L_2(\Omega)}$ and the linear functional $\ell(v)=(f,v)_{L_2(\Omega)}$.

\paragraph{Explicit loss formulation and iterative scheme.}
Following (W1), the solution is constructed iteratively via the Uzawa scheme by minimizing a sequence of concrete energy functionals. First, the initial ansatz $u^0$ is obtained by the standard deep Ritz method:
\begin{equation}
    u^0 = \arg \min_{u_\theta} \mathcal{L}(u_\theta) := \arg \min_{u_\theta} \int_\Omega \left[ \frac{1}{2} (u_\theta'(x))^2 - f(x)u_\theta(x) \right] dx.
\end{equation}
Subsequently, for each iteration $k \ge 0$, the correction $r^k$ is computed by minimizing the residual energy, and the overall solution is updated as $u^{k+1} = u^k + r^k$ (denoted algorithmically as $u^{k+1} \leftarrow u^k + r^k$ in the subsequent figures). The functional for this correction step is given by:
\begin{equation}
    r^{k} = \arg \min_{r_\theta} \mathcal{L}_k(r_\theta) := \arg \min_{r_\theta} \int_\Omega \left[ \frac{1}{2} (r_\theta'(x))^2 + (u^k)'(x)r_\theta'(x) - f(x)r_\theta(x) \right] dx.
\end{equation}
Minimizing $\mathcal{L}_k(r_\theta)$ is equivalent to finding $\arg\min_{r_\theta} \tfrac12\| r_\theta - e^k \|_{H^1_0(\Omega)}^2$, where $e^k = u^* - u^k$ is the exact error.

\paragraph{Neural network ansatz.}
To approximate the solution, we employ a neural network ansatz $u_\theta(x)$ as defined in Section~3, explicitly selecting the smooth cut-off function $\xi(x) = (1-x^2)$ to strongly satisfy the boundary conditions by design. The specific network configuration varies depending on the complexity of the solution:
\begin{itemize}
    \item \textbf{Shallow configuration ($L=1$):} For the smooth solutions analyzed in Sections~\ref{subsec:UzawaRitz1.0} and~\ref{subsec:UzawaRitz1.1}, we use a network with $30$ neurons serving as Fourier feature generators, and no additional hidden processing layers. This acts strictly as a tunable Fourier basis.
    \item \textbf{Deep configuration ($L=2$):} For the high-frequency problem addressed in Section~\ref{subsec:UzawaRitz1.2}, we employ a network with $30$ neurons acting as Fourier feature generators, followed by one hidden layer equipped with a $\tanh$ activation function.
\end{itemize}

\paragraph{Spectral initialization strategy.}
To ensure the neural network frequencies are correctly tuned at each phase of the algorithm, we apply a consistent spectral initialization strategy across all experiments in this section:
\begin{itemize}
    \item \textbf{Initialization of $u^0$:} We analyze the source term $f$. Since $f$ naturally resides in the dual space $H^{-1}$ for a second-order elliptic problem, we compute its normalized cumulative power spectral density (NCPSD) defined in \eqref{eq: discrete_NCPSD} using a regularity index $s = -1$.
    \item \textbf{Initialization of $r^0$:} After training $u^0$, we lack a prior weak residual to analyze for the first correction. However, since $-(r^0)'' = (u^0)'' + f$ holds in the weak sense, we evaluate the strong residual $(u^0)'' + f$ and compute its NCPSD using $s = -1$.
    \item \textbf{Initialization of $r^k$ ($k \ge 1$):} For subsequent Uzawa iterative steps, we analyze the previously trained weak residual representative $r^{k-1}$. Because $r^{k-1}$ belongs to the test space $\mathbb{V} = H^1_0$ and approximates the error in the energy norm, we compute its NCPSD using $s = 1$.
\end{itemize}

We employ a high-density $P_3$ quadrature rule (as detailed in Section~\ref{sec:Loss_Discretization}) across all experiments to accurately evaluate these distributions and compute the loss functions. The specific training schedules (epochs, collocation points, and learning rates) are detailed in the respective tables for each experiment.

\subsubsection{Baseline with Adam optimization}\label{subsec:UzawaRitz1.0}

Our initial experiment targets the smooth manufactured solution $u^{*}(x)=\sin(\pi x)$. We employ the standard Adam optimizer alongside the Shallow Configuration to train both the initial guess $u^0$ and the subsequent iterative corrections $r^k$, adhering to the training schedule detailed in Table~\ref{tab:adam_regime_params}. The evolution of the relative error and the corresponding NCPSD analysis to select $[\omega_{\min},\omega_{\max}]$ are captured in Figure~\ref{fig:Combined_Uzawa_results_1.0}. Providing deeper insight into the correction mechanism, Figure~\ref{fig:correction_analysis_1.0} demonstrates that the method successfully approximates the exact error function $e^0$ and its derivative. However, while the approach ultimately yields a highly accurate final error function (Figure~\ref{fig:final_error_1.0}), this pure Adam optimization strategy exhibits noticeably slower convergence when compared to the hybrid methods explored in subsequent sections.

\begin{table}[H]
\centering
\begin{tabular}{lcccc}
\hline
\textbf{Iteration} & \textbf{Phase} & \textbf{Points ($N_K$)} & \textbf{Epochs} & \textbf{Learning Rate} \\ \hline
$k=0$ (Initial) & $u^0$ & $9000$ & $1000$ & $9\cdot 10^{-3}$ \\
$k=1$ & $r^0$ & $9000$ & $2000$ & $10^{-4}$ \\
$k=2$ & $r^1$ & $9000$ & $3000$ & $10^{-5}$ \\
\end{tabular}
\caption{Training hyperparameters for the pure Adam experiment.}
\label{tab:adam_regime_params}
\end{table}

\begin{figure}[H]
    \centering
    \subfloat[Relative error evolution ($6,000$ epochs).]{%
        \label{fig:Uzawa_weak_error_1.0}%
        \includegraphics[width=0.48\textwidth]{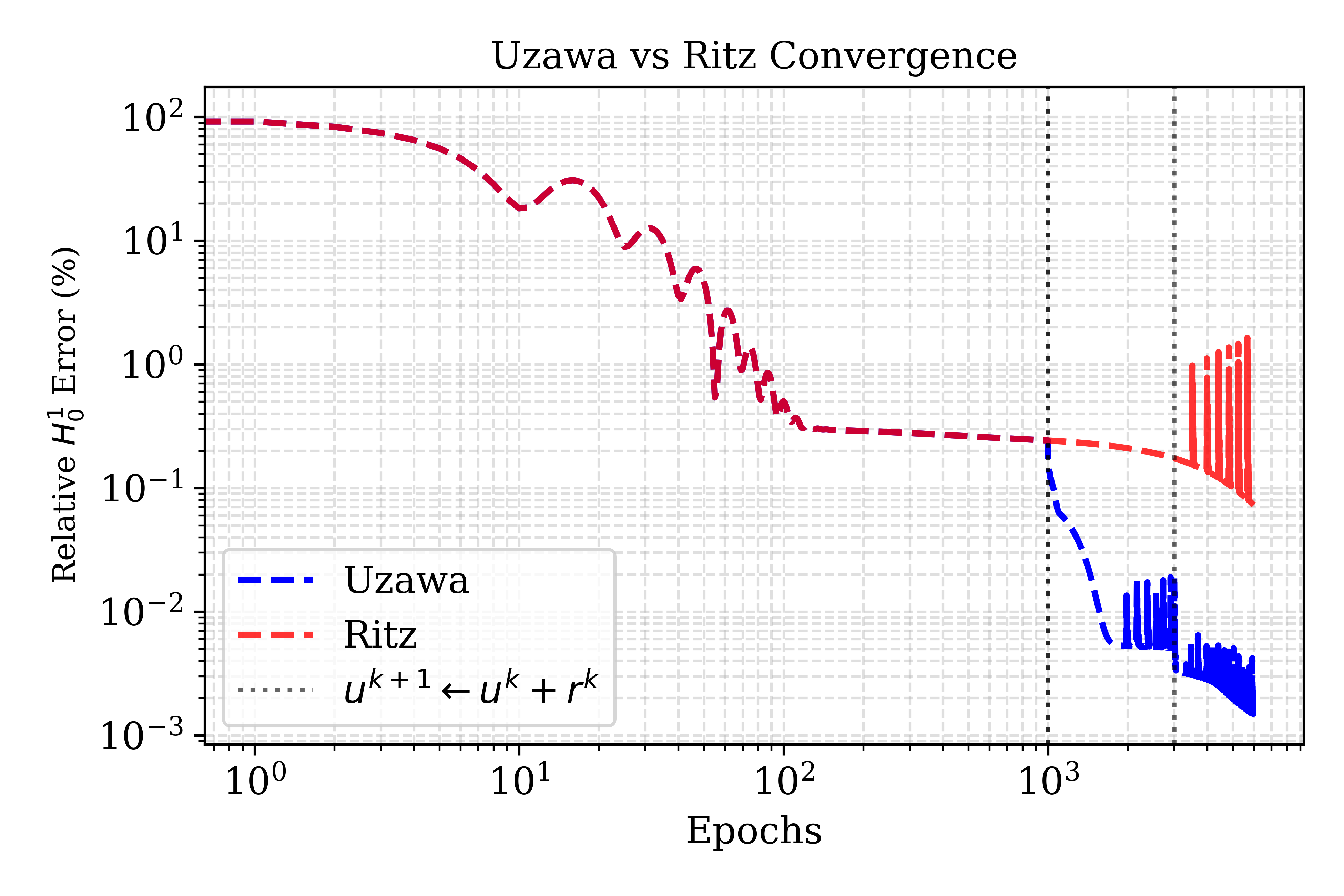}%
    }
    \hfill
    \subfloat[NCPSD bandwidth selection.]{%
        \label{fig:Uzawa_weak_variance_1.0}%
        \includegraphics[width=0.48\textwidth]{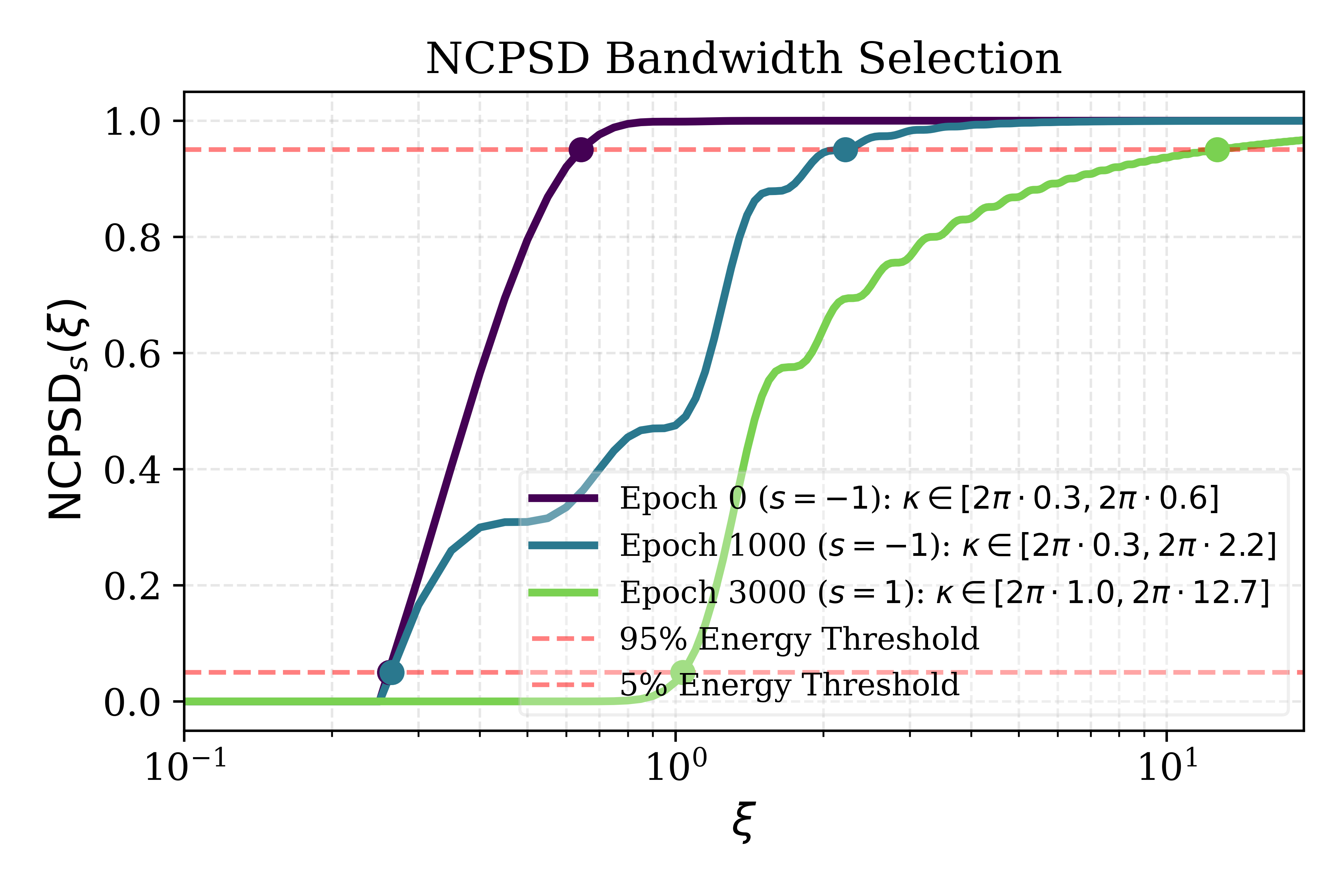}%
    }
    \caption{Experimental results with Adam: (a) Relative error and (b) The NCPSD analysis.}
    \label{fig:Combined_Uzawa_results_1.0}
\end{figure}

\begin{figure}[H]
    \centering
    \includegraphics[width=0.8\textwidth]{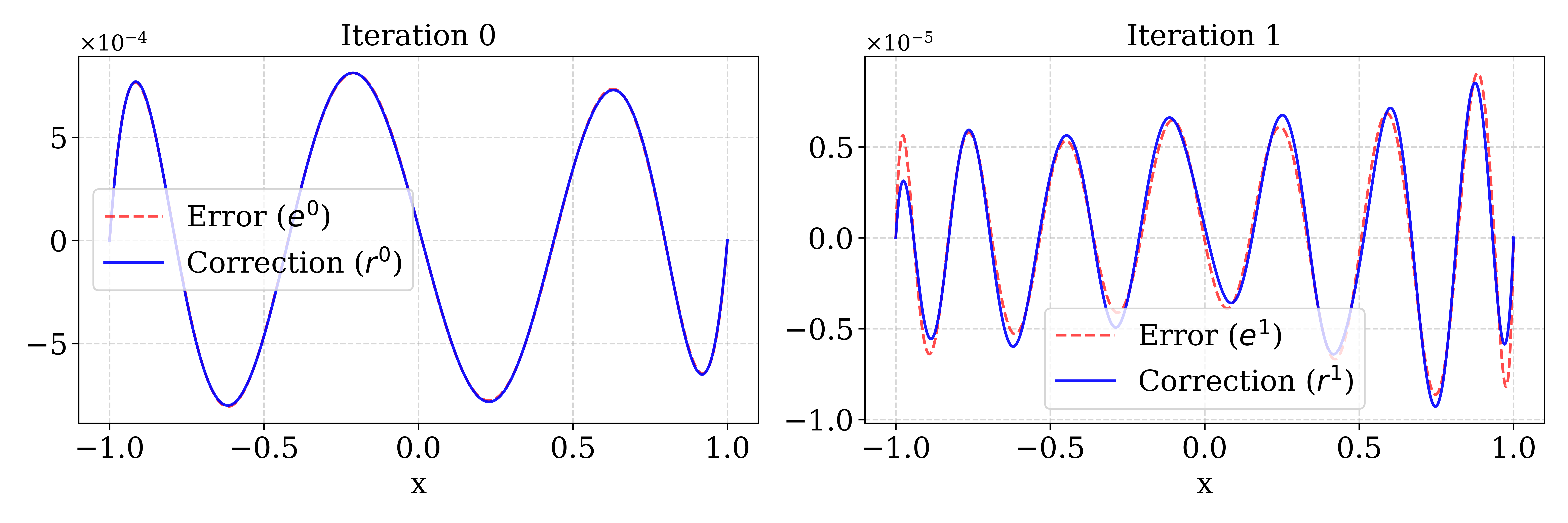}
    \vspace{0.5em}
    \includegraphics[width=0.8\textwidth]{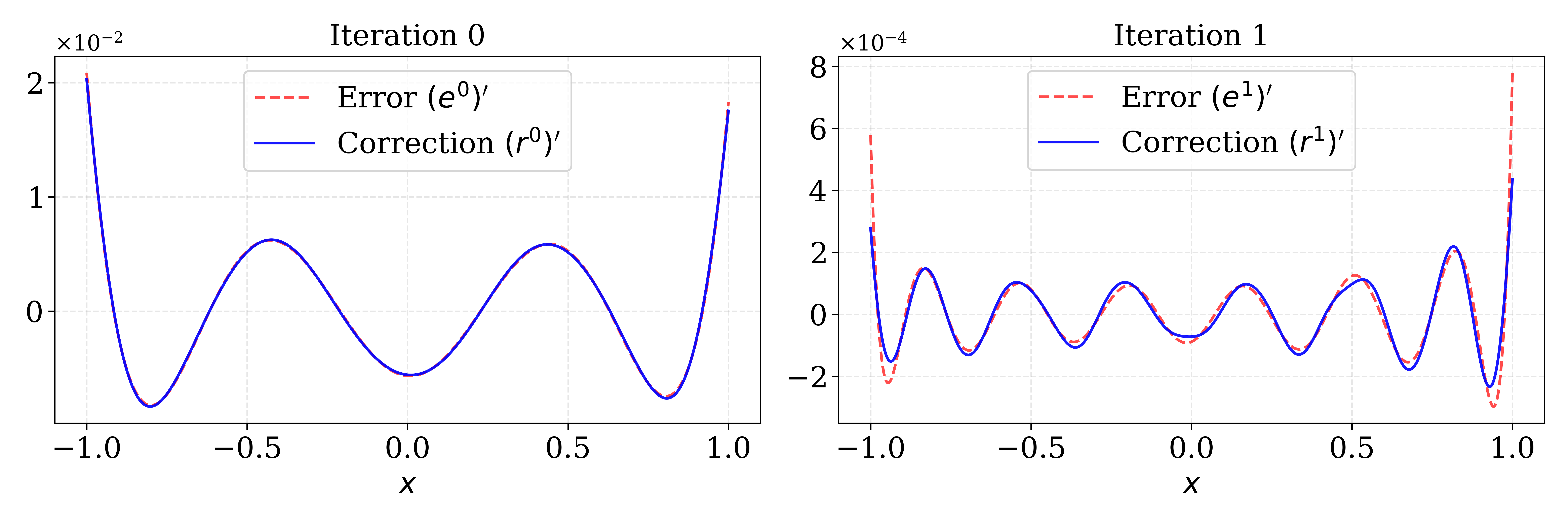}
    \caption{Analysis of the correction step ($k=0$) after $6,000$ epochs. Top: Comparison between the true error $e^0$ and the learned correction $r^0$. Bottom: Comparison of their derivatives.}
    \label{fig:correction_analysis_1.0}
\end{figure}

\begin{figure}[H]
    \centering
    \includegraphics[width=0.8\textwidth]{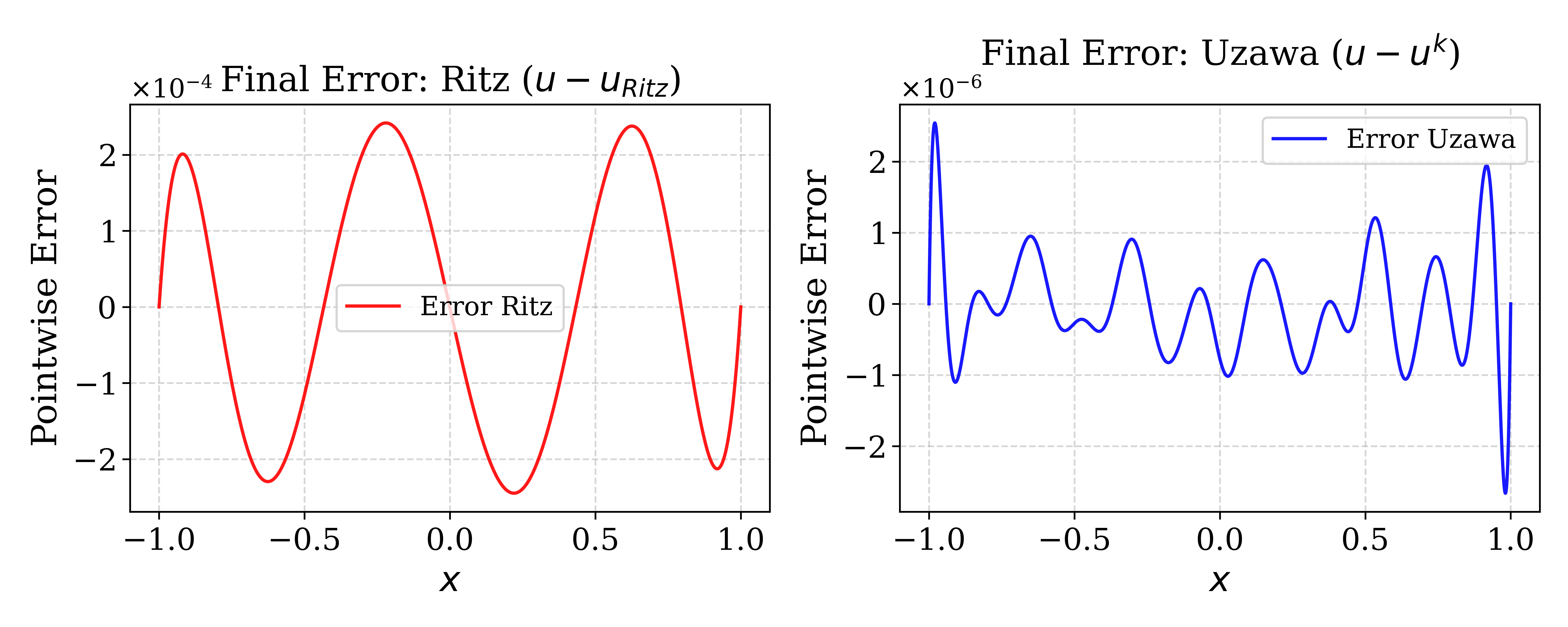}
    \caption{Final error function.}
    \label{fig:final_error_1.0}
\end{figure}

\subsubsection{Acceleration with hybrid LS/Adam}\label{subsec:UzawaRitz1.1}

To accelerate convergence, we repeat the baseline experiment using the Hybrid LS/Adam optimizer, maintaining the Shallow Configuration for both the initial guess and its iterative corrections. By alternating between least-squares projections for the linear weights and Adam updates for the hidden parameters (detailed in Table~\ref{tab:ls_adam_regime_params}), the method achieves a rapid reduction in the relative $H^1_0$ norm (Figure~\ref{fig:Uzawa_weak_error_1.1}). Concurrently, the Uzawa-based spectral matching successfully isolates higher-frequency modes (Figure~\ref{fig:Uzawa_ncpsd_1.1}). The efficacy of this correction mechanism is validated in Figure~\ref{fig:correction_analysis_1.1}, which demonstrates a strong agreement between the residual $r^k$ and the exact error $e^k$ alongside their derivatives, ultimately resulting in the highly accurate final error function shown in Figure~\ref{fig:final_error_1.1}.
\begin{table}[H]
\centering
\begin{tabular}{lcccc}
\hline
\textbf{Iteration} & \textbf{Phase} & \textbf{Points ($N_K$)} & \textbf{Epochs} & \textbf{Learning Rate} \\ \hline
$k=0$ (Initial) & $u^0$ & $9000$ & $1000$ & $10^{-2}$ \\
$k=1$ & $r^0$ & $9000$ & $1000$ & $10^{-3}$ \\
$k=2$ & $r^1$ & $9000$ & $1000$ & $10^{-3}$ \\
\end{tabular}
\caption{Training hyperparameters (Hybrid LS/Adam). In this setup, both the Gradient step (Adam) and the least squares step utilize the same density of collocation points.}
\label{tab:ls_adam_regime_params}
\end{table}

\begin{figure}[H]
    \centering
    \subfloat[Relative error evolution ($3,000$ total epochs).]{%
        \label{fig:Uzawa_weak_error_1.1}%
        \includegraphics[width=0.48\textwidth]{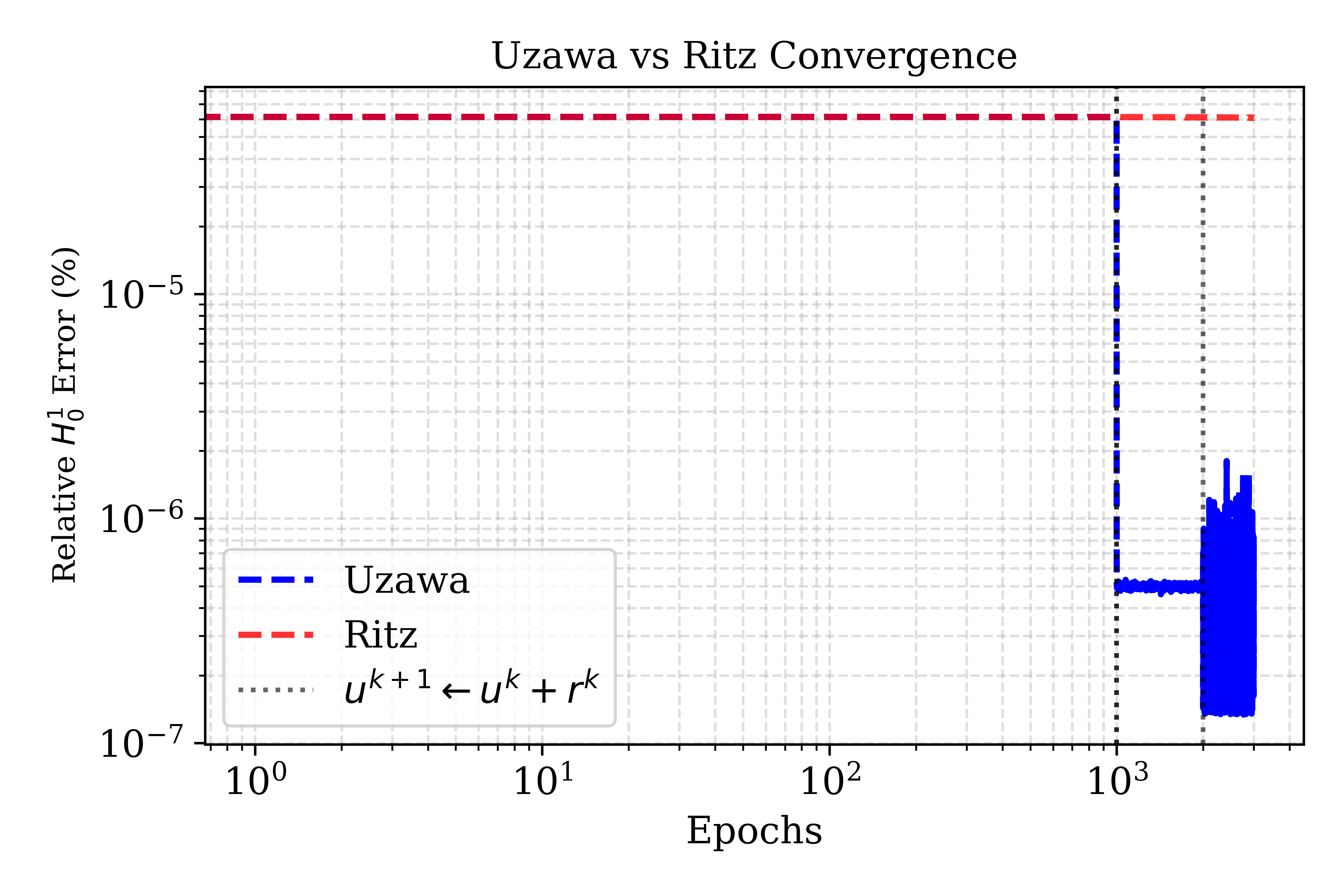}%
    }
    \hfill
    \subfloat[NCPSD bandwidth selection.]{%
        \label{fig:Uzawa_ncpsd_1.1}%
        \includegraphics[width=0.48\textwidth]{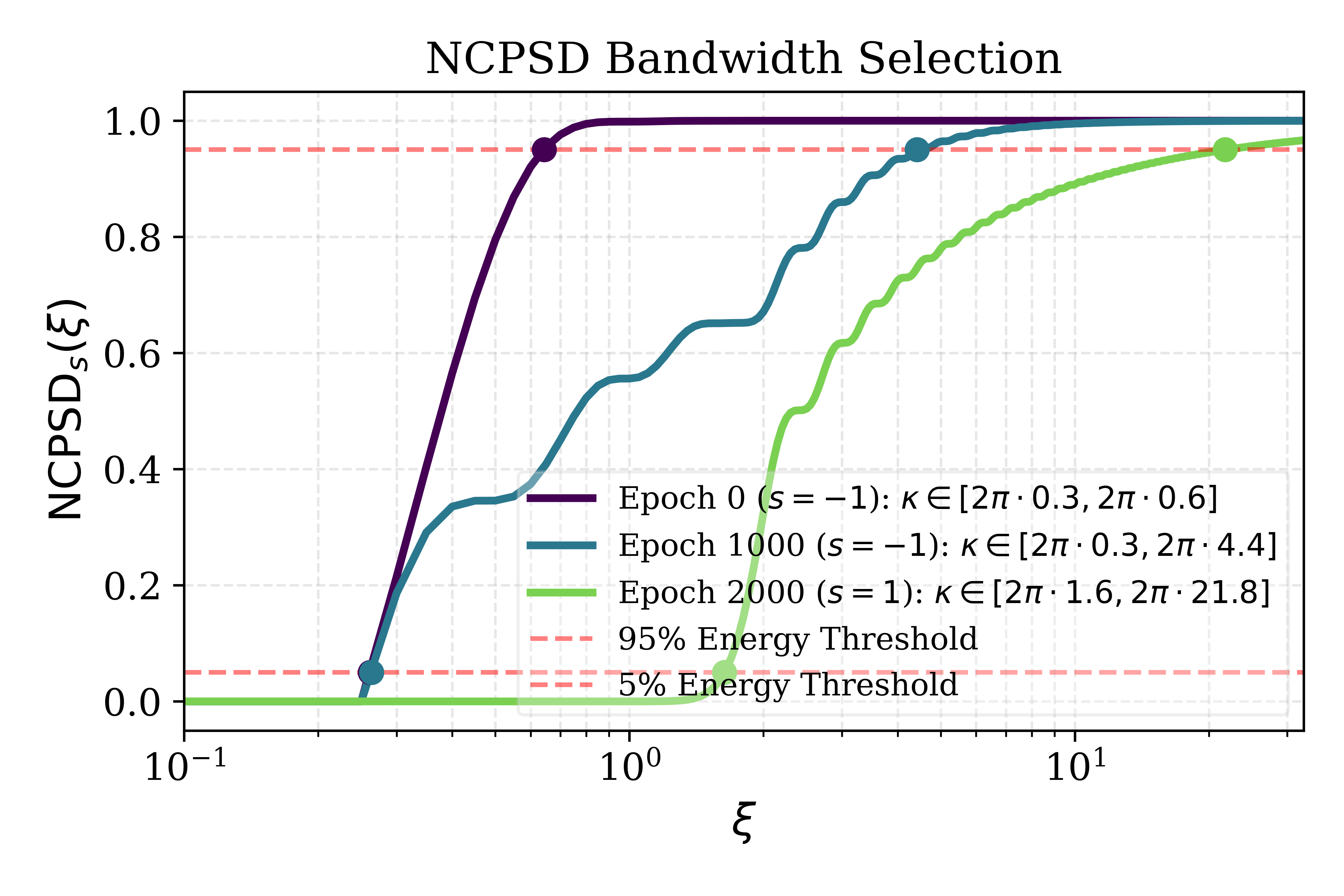}%
    }
    \caption{Experimental results with LS/Adam: (a) Relative error and (b) The NCPSD analysis.}
    \label{fig:Combined_Uzawa_results_1.1}
\end{figure}

As shown in Figure~\ref{fig:Combined_Uzawa_results_1.1}, the LS/Adam strategy drastically reduces the training epochs, achieving superior accuracy in $3,000$ total epochs compared to the Adam optimization.

\begin{figure}[H]
    \centering
    \includegraphics[width=0.75\textwidth]{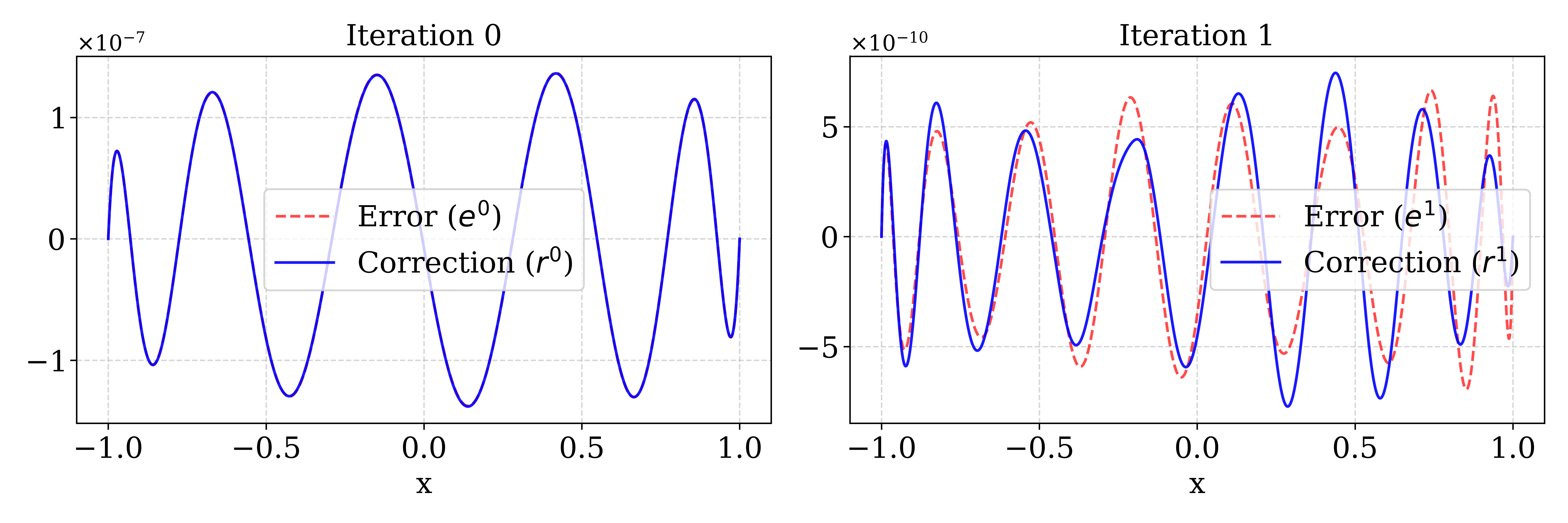}
    \vspace{0.5em}
    \includegraphics[width=0.75\textwidth]{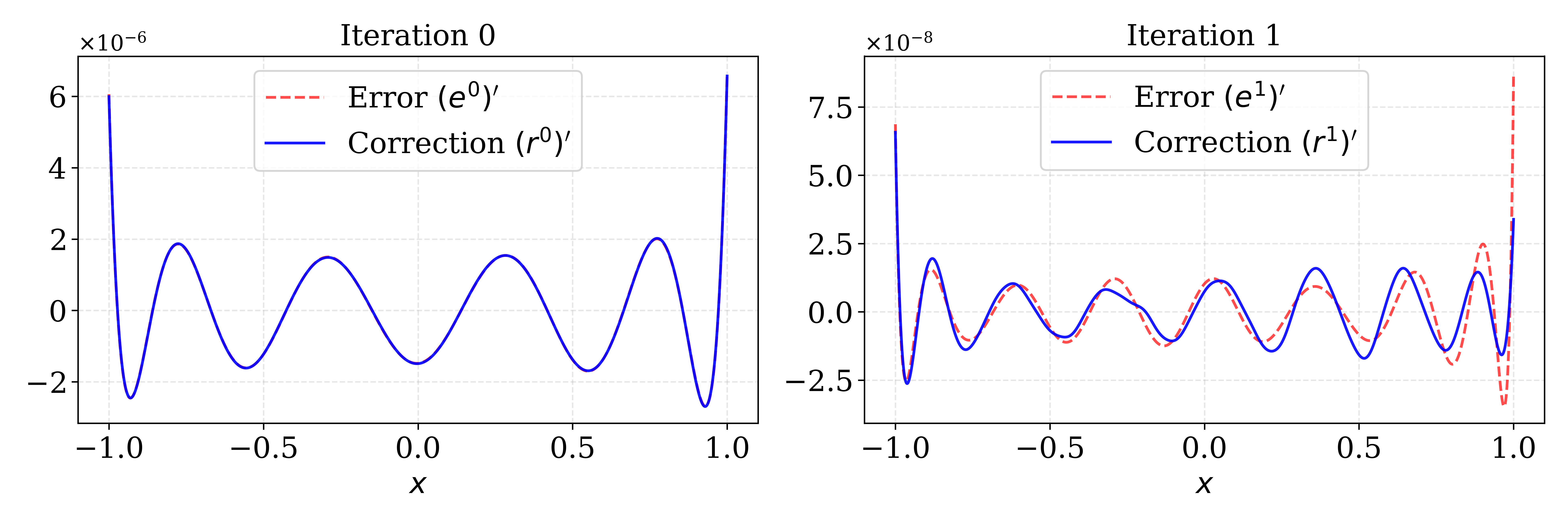}
    \caption{Analysis of the correction step ($k=0$) after $3,000$ epochs. Top: Comparison between the true error $e^0$ and the learned correction $r^0$. Bottom: Comparison of their derivatives.}
    \label{fig:correction_analysis_1.1}
\end{figure}

\begin{figure}[H]
    \centering
    \includegraphics[width=0.8\textwidth]{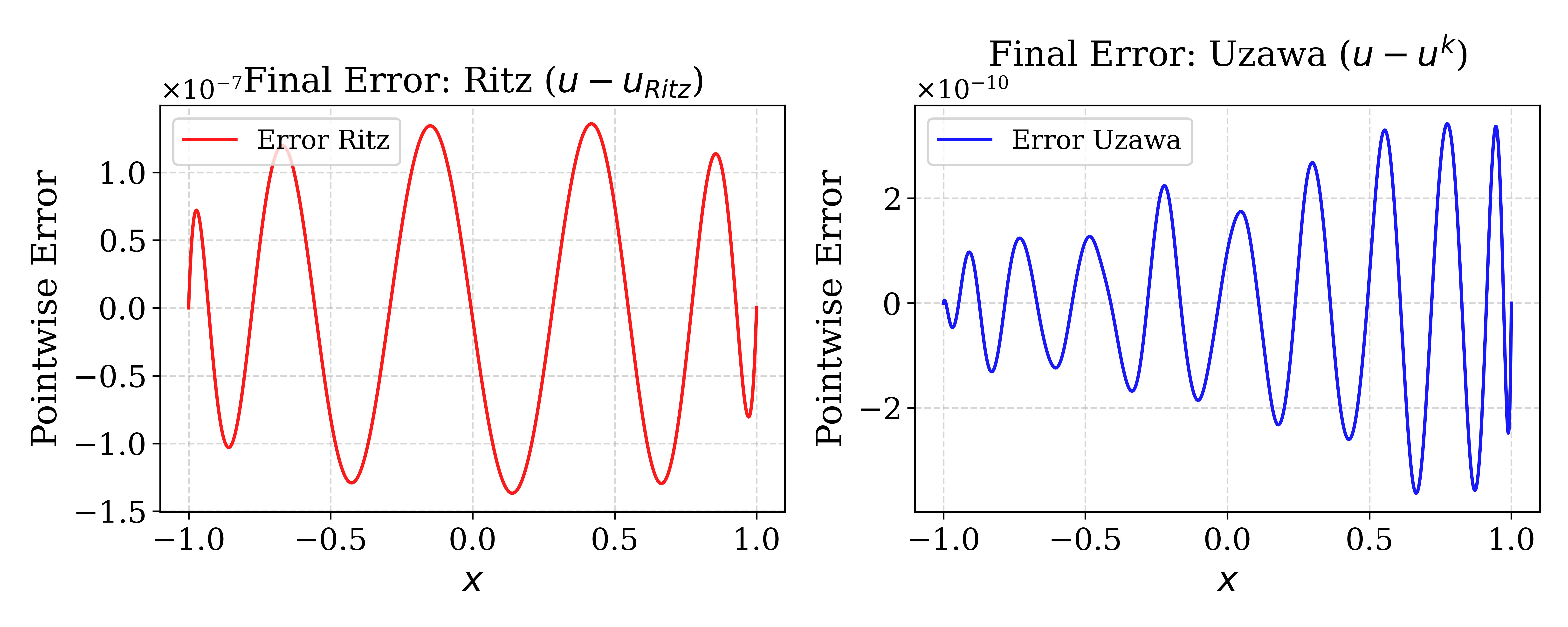}
    \caption{Final error function.}
    \label{fig:final_error_1.1}
\end{figure}

\subsubsection{High-frequency resolution via spectral matching}\label{subsec:UzawaRitz1.2}

In this experiment, we evaluate the method using the high-frequency manufactured solution $u^*(x) = \sin(40\pi x)$ to address spectral bias. The model is trained using the hybrid LS/Adam optimizer with the hyperparameters detailed in Table~\ref{tab:ls_adam_regime_params_1.2}.  Figure~\ref{fig:Combined_Uzawa_results_1.2} shows how we select the bandwidth $[\omega_{\min},\omega_{\max}]$. Furthermore, Figure~\ref{fig:correction_analysis_1.2} shows that the hybrid architecture captures the high-frequency oscillations of the error, resulting in the final pointwise error presented in Figure~\ref{fig:final_error_1.2}.

\begin{table}[H]
\centering
\begin{tabular}{lcccc}
\hline
\textbf{Iteration} & \textbf{Phase} & \textbf{Points ($N_K$)} & \textbf{Epochs} & \textbf{Learning Rate} \\ \hline
$k=0$ (Initial) & $u^0$ & $9000$ & $300$ & $10^{-3}$ \\
$k=1$ & $r^0$ & $9000$ & $1000$ & $10^{-3}$ \\
$k=2$ & $r^1$ & $9000$ & $1000$ & $10^{-3}$ \\
\end{tabular}
\caption{Training hyperparameters.}
\label{tab:ls_adam_regime_params_1.2}
\end{table}

\begin{figure}[H]
    \centering
    \begin{subfigure}[b]{0.48\textwidth}
        \includegraphics[width=\textwidth]{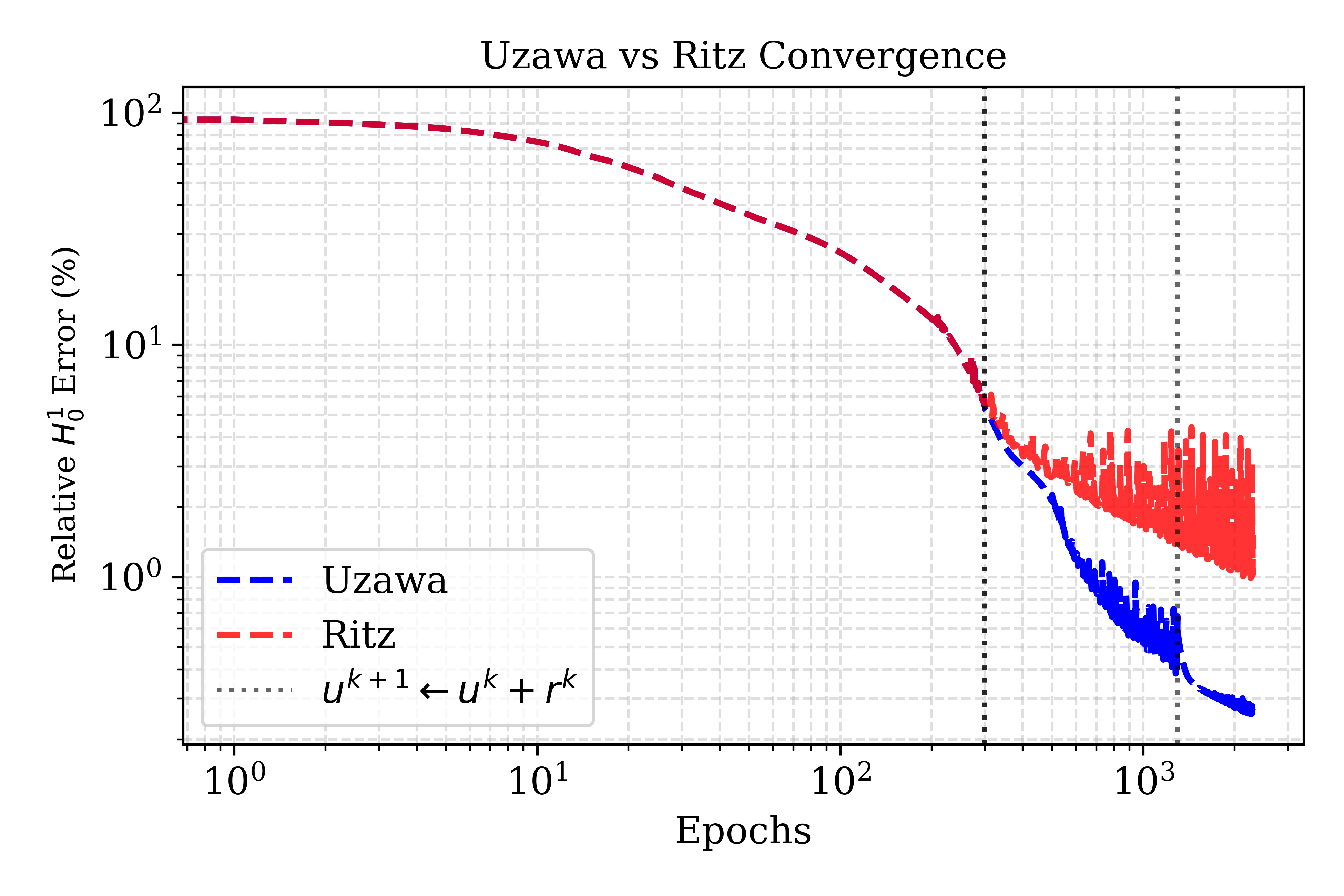}
        \caption{Relative error evolution.}
        \label{fig:Uzawa_weak_error_1.2}
    \end{subfigure}
    \hfill
    \begin{subfigure}[b]{0.48\textwidth}
        \includegraphics[width=\textwidth]{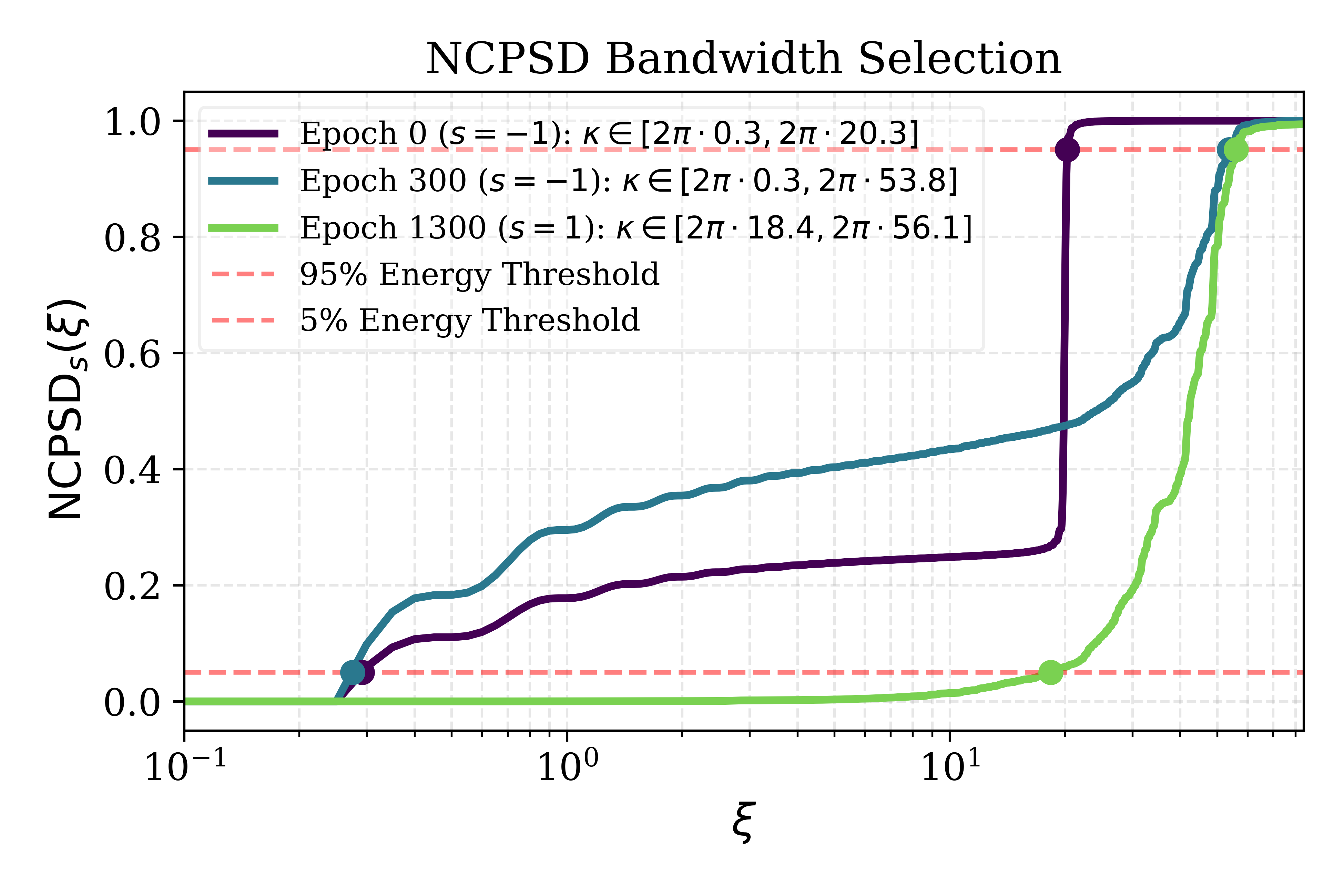}
        \caption{NCPSD bandwidth selection.}
        \label{fig:spectral_ncpsd_1.2}
    \end{subfigure}
    \caption{Experimental results for the high-frequency case: (a) Relative error. (b) The NCPSD bandwidth selection.}
    \label{fig:Combined_Uzawa_results_1.2}
\end{figure}

\begin{figure}[H]
    \centering
    \includegraphics[width=0.75\textwidth]{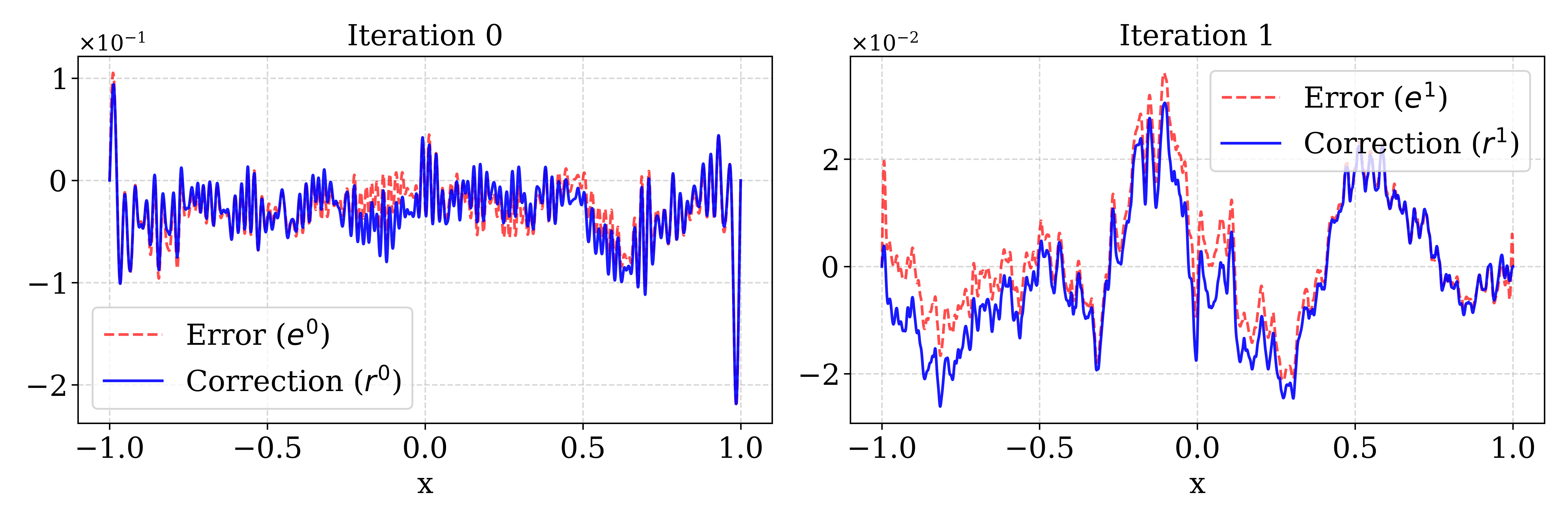}
    \vspace{0.5em}
    \includegraphics[width=0.75\textwidth]{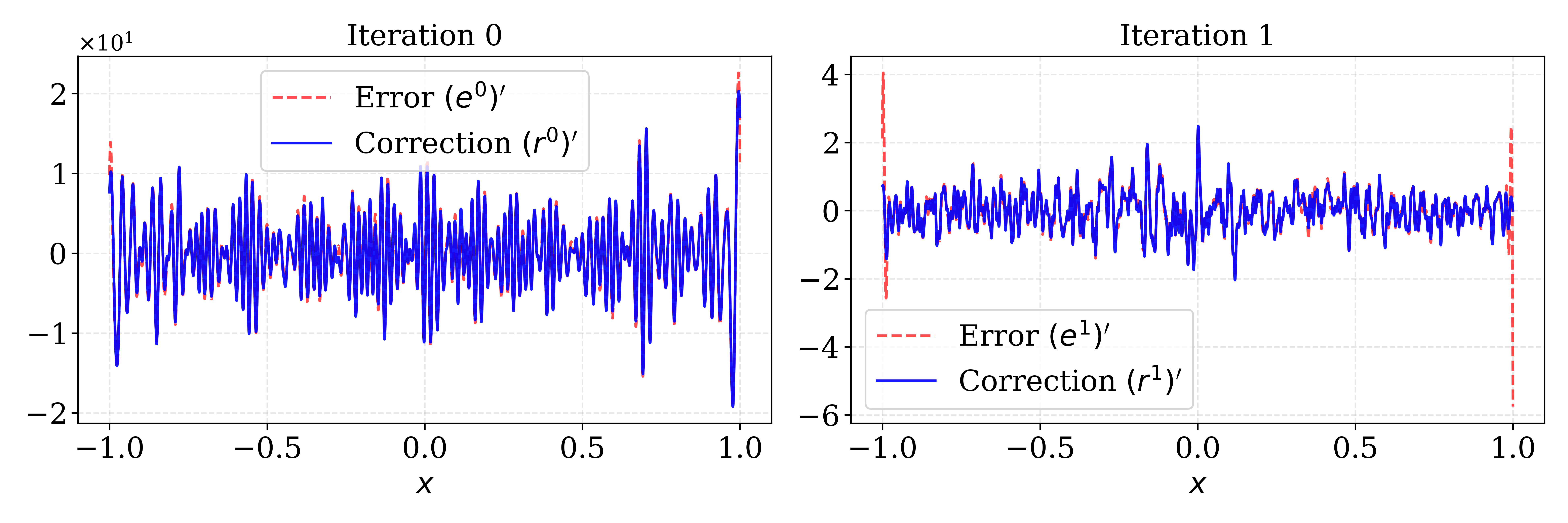}
    \caption{Detailed correction analysis for $u^*(x) = \sin(40\pi x)$. The hybrid architecture captures the high-frequency oscillations of the error $e^0$.}
    \label{fig:correction_analysis_1.2}
\end{figure}

\begin{figure}[H]
    \centering
    \includegraphics[width=0.8\textwidth]{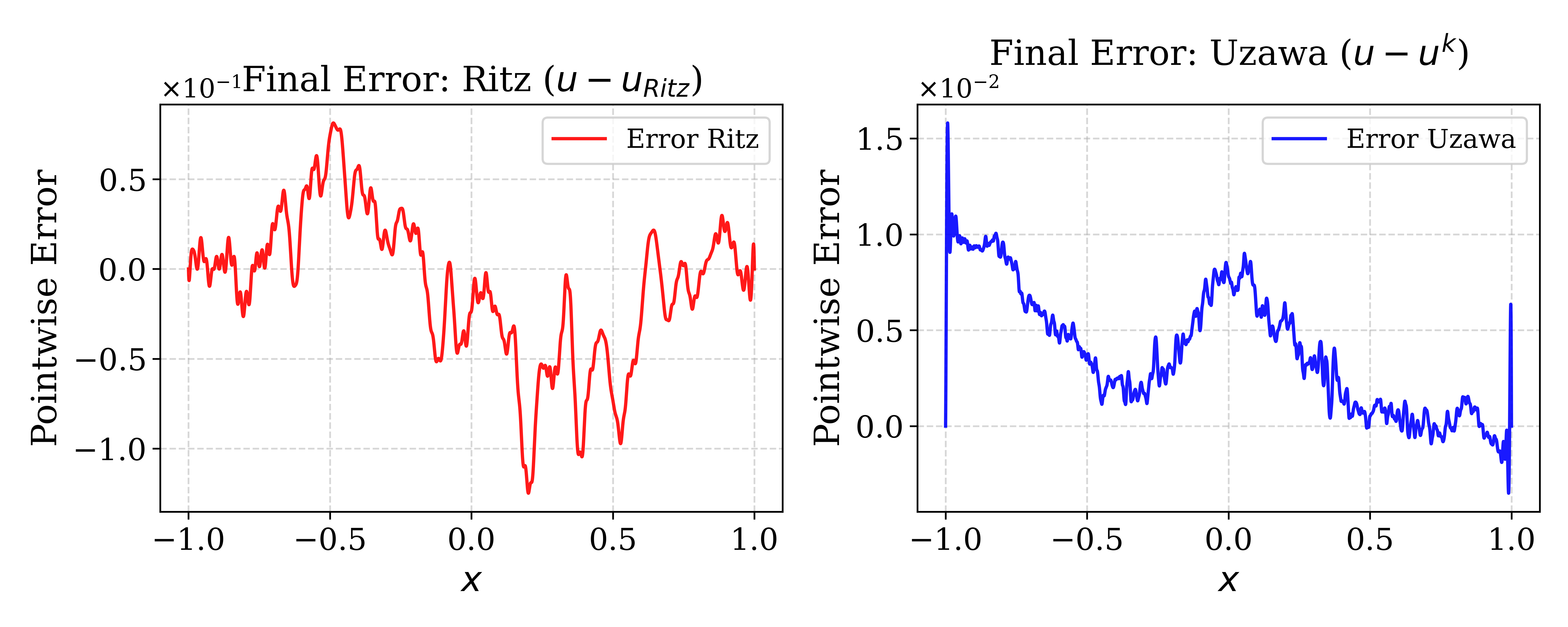}
    \caption{Final pointwise error for the high-frequency experiment.}
    \label{fig:final_error_1.2}
\end{figure}

\subsection{Ultra-weak formulation with an $H^{-2}$ source term}

We consider the domain $\Omega = (-1, 1) \subset \mathbb{R}$ and the boundary value problem governed by a distributional source term:
\begin{equation}
\left\{
\begin{aligned}
    -u'' &= \delta' & &\text{in } \Omega, \\
    u(-1) = u(1) &= 0. &
\end{aligned}
\right.
\label{eq:Elliptic-ode-ultraweak}
\end{equation}

Here, $\delta'$ denotes the distributional derivative of the Dirac delta. The exact solution is $u^{*}(x)=\tfrac12 (x+1)-H(x)$, where $H(x)$ is the Heaviside step function. We follow (U1) with $\mathbb{U}=L_2(\Omega)$ and $\mathbb{V}=H^1_{0}(\Omega) \cap H^2(\Omega)$.
\paragraph{Explicit loss formulation.}
Following (U1), we minimize the energy of the test functions to iteratively recover the primal solution, observing that the test-to-trial operator in this setting acts as $B'v = -v''$. First, the initial test function approximation $v$ is obtained by minimizing the Adjoint Ritz functional:
\begin{equation}
    v = \arg \min_{v_\theta} \int_\Omega \frac{1}{2} (v_\theta''(x))^2 dx + v_\theta'(0).
\end{equation}
The initial primal ansatz is then recovered via $u^0 = B'v = -(v)''$. Subsequently, for each iteration $k \ge 0$, the correction test function $r^k$ is computed by minimizing the Uzawa loss, which explicitly incorporates the previous primal approximation $u^k$:
\begin{equation}
    r^{k} = \arg \min_{r_\theta} \int_\Omega \left[ \frac{1}{2} (r_\theta''(x))^2 + u^k(x) r_\theta''(x) \right] dx + r_\theta'(0).
\end{equation}
Note that the isolated derivative terms ($v_\theta'(0)$ and $r_\theta'(0)$) arise from the duality pairing $\langle \delta', \phi \rangle := -\phi'(0)$, which appears with a negative sign ($-\ell(\phi)$) in the respective energy functionals. Finally, following the Uzawa scheme, the primal solution is updated as $u^{k+1} = u^k + B'r^k = u^k - (r^k)''$. In the algorithmic context of the figures, this step is denoted as $u^{k+1} \leftarrow u^k + \delta^k$, where $\delta^k = -(r^k)''$ represents the effective correction in the trial space.

\paragraph{Neural network ansatz.}
To ensure that the neural network output is $H^2$-continuous (as strictly required by the test space), the initial approximation $u^0 = -(v)''$ is calculated using a shallow neural network $v_\theta$ equipped with cubic ReLU activations ($\text{ReLU}^3$). For subsequent Uzawa iterations, the corrections $r^k$ are calculated using a neural network composed of two layers: the first layer is a Fourier feature mapping and the second layer is a $\text{ReLU}^3$ network with 30 neurons.

\paragraph{Spectral initialization strategy.}
For the distributional source $f = \delta' \in H^{-2}$, the $H^{-2}$-weighted energy spectrum decays slowly, reflecting a heavy high-frequency tail. This severe lack of regularity makes it difficult for standard networks to capture the jump discontinuity. We apply the following spectral initialization strategy:
\begin{itemize}
    \item \textbf{Initialization of $r^0$:} To tune the frequencies for the first correction, we evaluate the highly singular strong residual $-(u^0)'' - f$ and compute its NCPSD, also utilizing the regularity index $s = -2$.
    \item \textbf{Initialization of $r^k$ ($k \ge 1$):} For subsequent Uzawa iterative steps, we rely on the previously trained adjoint test functions. Specifically, we compute the NCPSD directly on the previous primal correction proxy $-(r^{k-1})''$. To capture its spectral energy distribution effectively without over-weighting the singularities, we evaluate the NCPSD in the $L^2$ norm by setting $s = 0$.
\end{itemize}

\subsubsection{Singularity resolution via adjoint iterations}
The model is trained using the hybrid LS/Adam optimizer with the hyperparameters detailed in Table~\ref{tab:ls_adam_regime_params_1.3}. Figure~\ref{fig:Uzawa_ultraweak_error_1.3} illustrates the relative error in the $L^2$ norm, and the bandwidth selection $[\omega_{\min},\omega_{\max}]$ in the NCPSD analysis is shown in Figure~\ref{fig:Uzawa_ultraweak_ncpsd_1.3}. The correction mechanism is analyzed in Figure~\ref{fig:correction_analysis_1.3}, showing that the network identifies the error distribution $e^k$ through the learned correction $-(r^k)''$, capturing the jump discontinuity at $x=0$. The final pointwise error is presented in Figure~\ref{fig:error_ek_rk_1.3}.

\begin{table}[H]
\centering
\begin{tabular}{lcccc}
\hline
\textbf{Iteration} & \textbf{Phase} & \textbf{Points ($N_K$)} & \textbf{Epochs} & \textbf{Learning Rate} \\ \hline
$k=0$ (Init) & $u^0$ & $3000$ & $100$ & $8\cdot 10^{-5}$ \\
$k=1$ & $r^0$ & $4500$ & $2500$ & $8\cdot 10^{-5}$ \\
$k=2$ & $r^1$ & $6000$ & $2500$ & $8\cdot 10^{-5}$ \\
\end{tabular}
\caption{Training hyperparameters for the ultra-weak experiment.}
\label{tab:ls_adam_regime_params_1.3}
\end{table}

\begin{figure}[H]
    \centering
    \subfloat[Relative error evolution ($5,100$ total epochs).]{%
        \label{fig:Uzawa_ultraweak_error_1.3}%
        \includegraphics[width=0.48\textwidth]{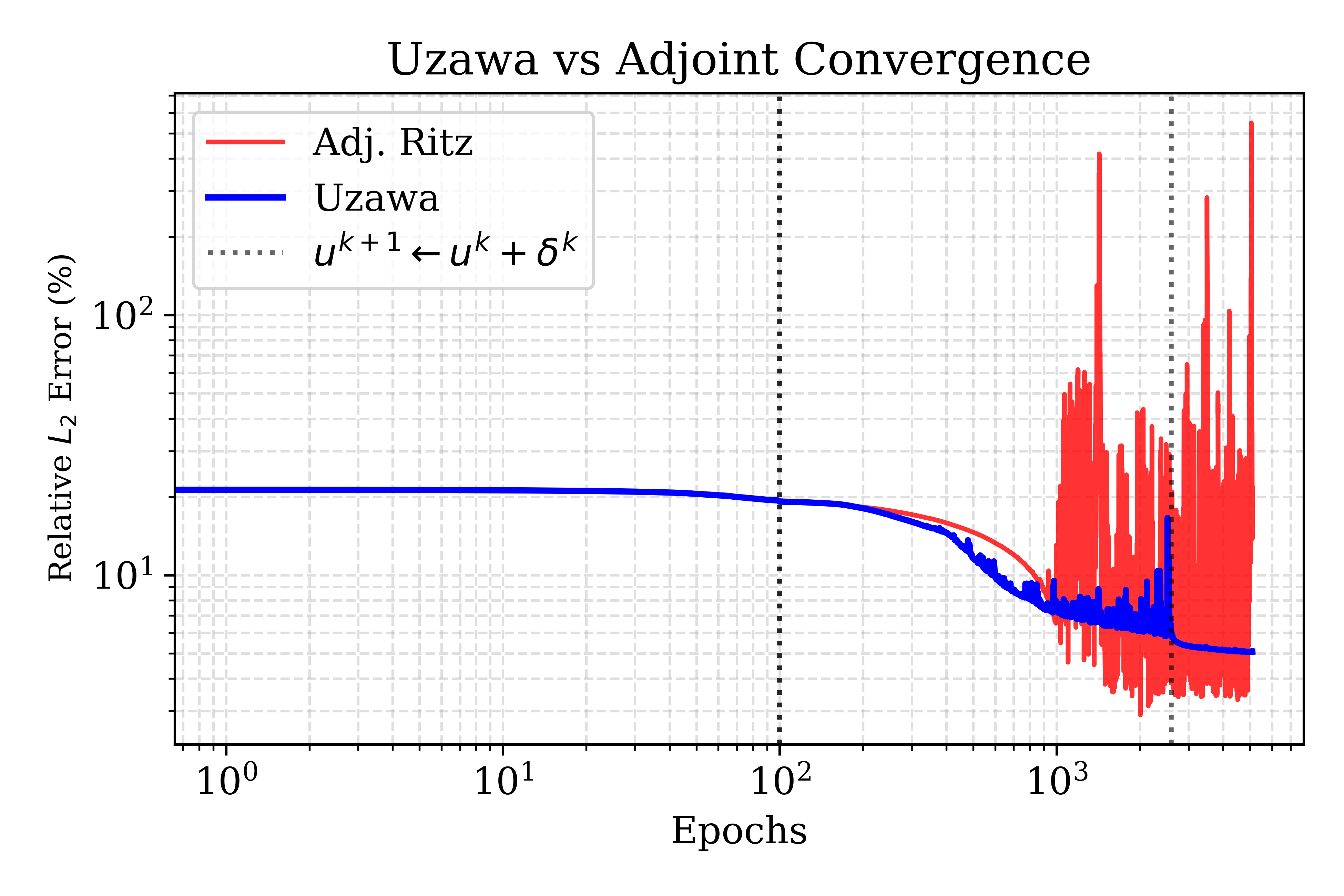}%
    }
    \hfill
    \subfloat[NCPSD bandwidth selection.]{%
        \label{fig:Uzawa_ultraweak_ncpsd_1.3}%
        \includegraphics[width=0.48\textwidth]{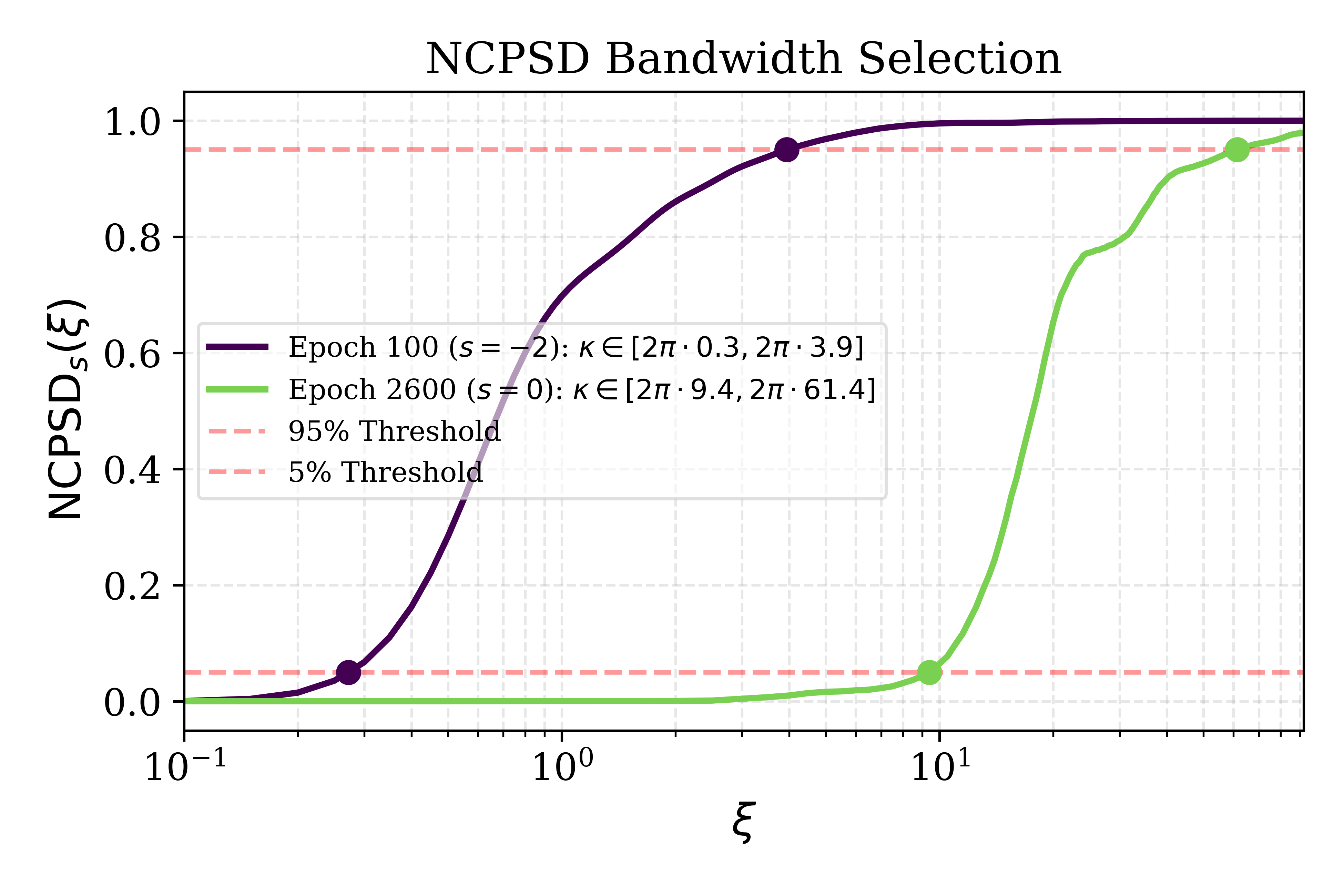}%
    }
    \caption{Experimental results for the ultra-weak formulation: (a) Relative error evolution and (b) Evolution of the NCPSD.}
    \label{fig:Combined_Uzawa_results_1.3}
\end{figure}

\begin{figure}[H]
    \centering
    \includegraphics[width=0.8\textwidth]{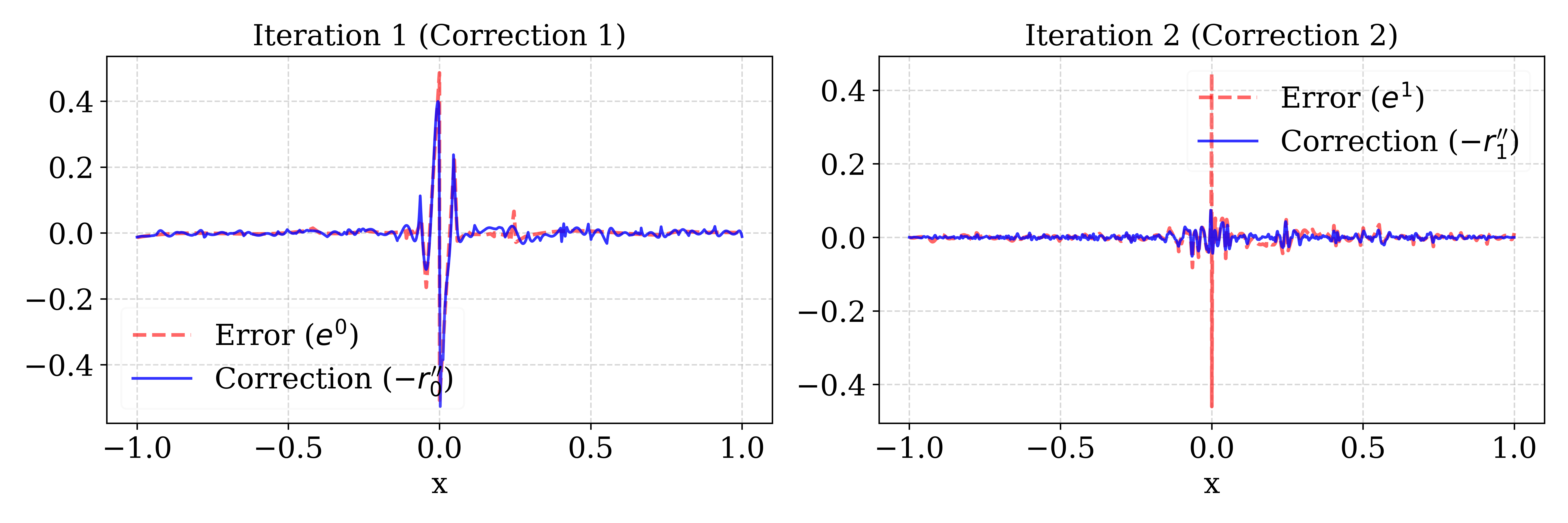}
    \caption{Correction step. The plot compares the true error $e^{k}$ (red dashed line) with the learned correction $-(r^k)''$ (blue solid line).}   
    \label{fig:correction_analysis_1.3}
\end{figure}

\begin{figure}[H]
    \centering
    \includegraphics[width=0.8\textwidth]{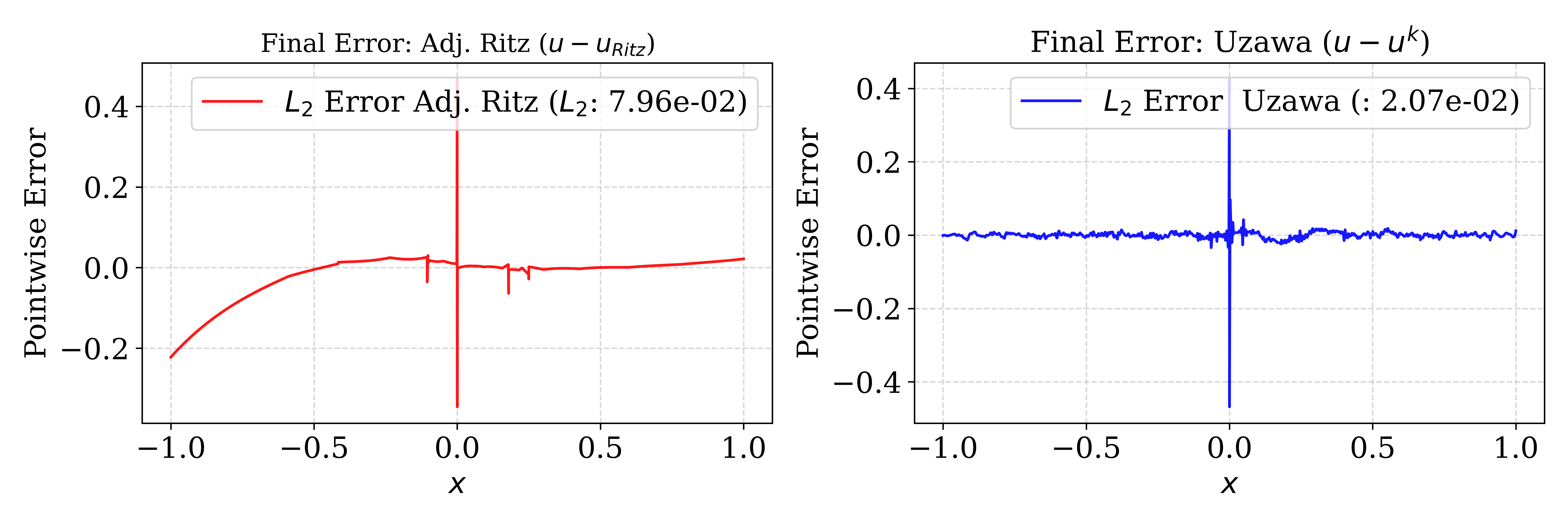}
    \caption{Final pointwise error.}
    \label{fig:error_ek_rk_1.3}
\end{figure}

\section{Conclusions and future work}

In this paper, we introduced the Ritz--Uzawa Neural Networks (RUNNs) framework, an iterative methodology to solve Partial Differential Equations (PDEs) across strong, weak, and ultra-weak variational formulations. By reformulating the variational problem as a sequence of Ritz-type minimizations within an inexact Uzawa loop, our approach addresses the instability common in standard neural network solvers. Theoretical analysis and numerical experiments confirm that this iterative scheme, combined with a high-density, unbiased stratified stochastic quadrature rule ($P_3$), converges to the solution. Furthermore, we show that the strong formulation provides a passive variance reduction mechanism, while the variance remains persistent in weak and ultra-weak regimes.

To address the spectral bias of standard neural networks, we implemented a data-driven frequency tuning strategy. By dynamically initializing a Sinusoidal Fourier Feature Mapping based on the normalized cumulative power spectral density (NCPSD) of the residuals, the network adapts its spectral bandwidth to capture high-frequency components and severe singularities. Combined with a hybrid least Squares/Adam (LS/Adam) optimization scheme, the RUNNs framework significantly accelerates convergence compared to first order gradient descent methods like Adam.

Numerical results demonstrate the robustness of RUNNs across different regularity regimes. The framework accurately resolved highly oscillatory solutions via spectral matching and successfully recovered a discontinuous $L^2$ solution from a distributional $H^{-2}$ source term using the ultra-weak formulation, a scenario where standard energy-based methods fail. These findings establish the Ritz--Uzawa iterative scheme as a stable and adaptable mesh-free solver. Future work will extend this framework to non-linear PDEs, higher-dimensional geometries, and time-dependent equations, where dynamic spectral tuning and variance-reduction iterations could offer substantial computational advantages.

\paragraph{Acknowledgments}
Pablo Herrera has received financial support from the Dirección de Investigación (DI), under the Vicerrectoría de Investigación, Creación e Innovación (VINCI) of the Pontificia Universidad Católica de Valparaíso (PUCV), through the 2024 Postdoctoral Fellowship. Jamie M. Taylor has received funding from the research project: PID2023-146678OB-I00 funded by MICIU/AEI /10.13039/501100011033. Carlos Uriarte is supported by the Basque Government through the Postdoctoral Program for the Improvement of Doctoral Research Personnel for the period 2024-2027 (Grant Ref. No. POS-2024-1-0004) and by the Research Projects PID2023-146678OB-I00 and PID2023-146668OA-I00, both funded by MICIU/AEI /10.13039/501100011033. Ignacio Muga is supported by the Chilean National Agency for Research and Development through the Fondecyt Project \#1230091. David Pardo has received funding from the following Research Projects/Grants: 
European Union's Horizon Europe research and innovation programme under the Marie Sklodowska-Curie Action MSCA-DN-101119556 (IN-DEEP). 
PID2023-146678OB-I00 funded by MICIU/AEI /10.13039/501100011033 and by FEDER, EU;
BCAM Severo Ochoa accreditation of excellence CEX2021-001142-S funded by MICIU / AEI / 10.13039/501100011033; 
Basque Government through the BERC 2022-2025 program;
BEREZ-IA (KK-2023/00012) and RUL-ET(KK-2024/00086), funded by the Basque Government through ELKARTEK;
Consolidated Research Group MATHMODE (IT1866-26) of the UPV/EHU given by the Department of Education of the Basque Government; 
BCAM-IKUR-UPV/EHU, funded by the Basque Government IKUR Strategy and by the European Union NextGenerationEU/PRTR. 
The research of Kristoffer van der Zee was supported by the Engineering and Physical Sciences Research Council (EPSRC), UK, under Grant EP/W010011/1

\paragraph{Declaration of generative AI and AI-assisted technologies in the manuscript preparation process}
During the preparation of this work the author(s) used Gemini in order to improve the English language, refine the phrasing, and format specific sections of the manuscript. After using this tool/service, the author(s) reviewed and edited the content as needed and take(s) full responsibility for the content of the published article.

\paragraph{CRediT contribution statement}
\textbf{Pablo Herrera:} Formal analysis, Investigation, Methodology, Software, Visualization, Writing - original draft, Writing - review \& editing. \textbf{Jamie M. Taylor:} Formal analysis, Methodology, Writing - review \& editing. \textbf{Carlos Uriarte:} Conceptualization, Formal analysis, Methodology, Writing - review \& editing. \textbf{Ignacio Muga:} Conceptualization, Formal analysis, Writing - review \& editing. \textbf{David Pardo:} Funding acquisition, Supervision, Validation, Writing - review \& editing. \textbf{Kristoffer G. van der Zee:} Conceptualization, Formal analysis, Methodology, Writing - review \& editing.

\bibliographystyle{siam}
\bibliography{bibliography}
\pagebreak
\appendix

\section{Proof of main theorems}
\subsection{Proof of Theorem~\ref{thm:approach1_inexact}}
\label{sec:proof_thm1}
Since $\rho<2\|B\|^{-2}$, by Proposition~\ref{prop:contraction} we have that $\|I-\rho B'B\|<1$. Thus, $1-\|I-\rho B'B\|$ is positive.

Let us define $e^k= u^k-u^*$. First, observe that $r^k=B(u^*-u^k)=-Be^k$. Hence, using~\eqref{eq:relative1} and triangular inequality, we have:
\begin{equation}\label{eq:rk}
\|r^k_\varepsilon\|_\mathbb{V} \le 
\|r^k_\varepsilon-r^k\|_\mathbb{V}+\|r^k\|_\mathbb{V}\leq (\varepsilon+1)\|r^k\|_\mathbb{V}\leq (\varepsilon+1)\|B\| \|e^k\|_\mathbb U.
\end{equation}
Moreover,

\begin{equation}\label{eq:dk}
\|\delta_\varepsilon^k\|_\mathbb{U}\leq \|\delta_\varepsilon^k-\delta^k\|_\mathbb{U}+\|\delta^k\|_\mathbb{U}
\leq (\varepsilon+1)\|\delta^k\|_\mathbb{U}=(\varepsilon+1)\|B'r_\varepsilon^k\|_\mathbb{U}
\leq (1+\varepsilon)^2\|B\|^2 \|e^k\|_\mathbb{U}. 
\end{equation}

Thus, it is enough to prove that $\{e^k\}$ is a contraction. Notice that
$$
\begin{array}{rl}
e^{k+1}= & u^{k+1}-u^*\\
= &  u^k+\rho\delta_\varepsilon^k -u^*\\
= & e^k + \rho(\delta^k_\varepsilon-\delta^k) +\rho \delta^k\\
= & e^k + \rho(\delta^k_\varepsilon-\delta^k) + \rho B'r_\varepsilon^k\\
= & e^k + \rho(\delta^k_\varepsilon-\delta^k) + \rho B'(r_\varepsilon^k-r^k) + \rho B'r^k\\
= & (I-\rho B'B)e^k + \rho(\delta^k_\varepsilon-\delta^k) + \rho B'(r_\varepsilon^k-r^k).
\end{array}
$$
Therefore,
$$
\begin{array}{rl}
\|e^{k+1}\|_\mathbb{U} \leq &  \|I-\rho B'B\| \|e^k\|_\mathbb{U}
+\rho \varepsilon( \|\delta^k\|_\mathbb{U} + \|B\|\|r^k\|_\mathbb{V})\\
\le &  \left(\|I-\rho B'B\|+ \rho (\varepsilon^2+2\varepsilon)\|B\|^2\right)\|e^k\|_\mathbb{U}\,.\\
\end{array}
$$
By the hypothesis of the Theorem, the constant $\|I-\rho B'B\|+ \rho (\varepsilon^2+2\varepsilon)\|B\|^2$ is less than $1$, which proves that $\{e^k\}$ is a contraction.

\subsection{Proof of Theorem~\ref{thm:approach2_inexact}}
\label{sec:proof_thm2}

Let us define $e^k_\varepsilon= u^k_\varepsilon-u^*$ and observe that $r^k=B(u^*-u^k_\varepsilon)=-Be^k_\varepsilon$. Analogously to~\eqref{eq:rk} and~\eqref{eq:dk}, we have:
$$
\|r^k_\varepsilon\|_\mathbb{V} \leq (\varepsilon+1)\|B\| \|e^k_\varepsilon\|_\mathbb U
\qquad\hbox{ and }\qquad 
\|\delta_\varepsilon^k\|_\mathbb{U}\leq (1+\varepsilon)^2\|B\|^2 \|e^k_\varepsilon\|_\mathbb{U}. 
$$
Thus, it remains to prove that $\{e^k_\varepsilon\}$ is a contraction.
Notice that:
$$
\begin{array}{rl}
e^{k+1}_\varepsilon= & u^{k+1}_\varepsilon-u^*\\
= &  u^{k+1}_\varepsilon-u^{k+1}+u^k_\varepsilon+\rho\delta_\varepsilon^k -u^*\\
= & u^{k+1}_\varepsilon-u^{k+1} + e^k_\varepsilon + \rho(\delta^k_\varepsilon-\delta^k) +\rho \delta^k\\
= & u^{k+1}_\varepsilon-u^{k+1} + e^k_\varepsilon + \rho(\delta^k_\varepsilon-\delta^k) + \rho B'r_\varepsilon^k\\
= & u^{k+1}_\varepsilon-u^{k+1} + e^k_\varepsilon + \rho(\delta^k_\varepsilon-\delta^k) + \rho B'(r_\varepsilon^k-r^k) + \rho B'r^k\\
= & u^{k+1}_\varepsilon-u^{k+1} + (I-\rho B'B)e^k_\varepsilon + \rho(\delta^k_\varepsilon-\delta^k) + \rho B'(r_\varepsilon^k-r^k).
\end{array}
$$
Therefore,
$$
\begin{array}{rl}
\|e^{k+1}_\varepsilon\|_\mathbb{U} \leq & \rho\varepsilon\|\delta_\varepsilon^k\|_\mathbb{U} + \|I-\rho B'B\| \|e^k_\varepsilon\|_\mathbb{U}
+\rho \varepsilon( \|\delta^k\|_\mathbb{U} + \|B\|\|r^k\|_\mathbb{V})\\
\le &  \left(\|I-\rho B'B\|+ \rho (\varepsilon^3+3\varepsilon^2+3\varepsilon)\|B\|^2\right)\|e^k_\varepsilon\|_\mathbb{U}\,.\\
\end{array}
$$
By the hypothesis of the Theorem, the constant $\|I-\rho B'B\|+ \rho (\varepsilon^3+3\varepsilon^2+3\varepsilon)\|B\|^2$ is less than $1$, which proves that $\{e^k_\varepsilon\}$ is a contraction.

\end{document}